\documentclass[14pt,oneside,reqno]{amsart}
\usepackage{amsmath}
\usepackage{amssymb}
\usepackage{amscd}
\usepackage{epsfig}
\usepackage{amsxtra}
\usepackage{pifont}
\usepackage{mathabx}
\usepackage{MnSymbol}
\usepackage{accents}
\usepackage{scalerel,stackengine}
\stackMath
\newcommand\reallywidehat[1]{%
\savestack{\tmpbox}{\stretchto{%
  \scaleto{%
    \scalerel*[\widthof{\ensuremath{#1}}]{\kern-.6pt\bigwedge\kern-.6pt}%
    {\rule[-\textheight/2]{1ex}{\textheight}}
  }{\textheight}%
}{0.5ex}}%
\stackon[1pt]{#1}{\tmpbox}%
}

\newcommand\reallywidecheck[1]{%
\savestack{\tmpbox}{\stretchto{%
  \scaleto{%
    \scalerel*[\widthof{\ensuremath{#1}}]{\kern-.6pt\bigwedge\kern-.6pt}%
    {\rule[-\textheight/2]{1ex}{\textheight}}
  }{\textheight}%
}{0.5ex}}%
\stackon[1pt]{#1}{\scalebox{-1}{\tmpbox}}%
}
\newcommand{\exend}{\hfill$\Diamond$}

\numberwithin{equation}{section}

\newcommand{\RR}{{\mathbb R}}

\newcommand{\ZZ}{{\mathbb Z}}
\newcommand{\CC}{{\mathbb C}}
\newcommand{\BB}{{\mathbb B}}

\newcommand{\XX}{\mathbb X}
\newcommand{\AAA}{\mathbb A}

\newcommand{\cM}{{\mathcal M}}

\newcommand{\cA}{{\mathcal A}}
\newcommand{\cB}{{\mathcal B}}
\newcommand{\cC}{{\mathcal C}}

\newcommand{\dd}{\mbox{\rm d}}
\newcommand{\sgn}{\mbox{\rm sgn}}
\newcommand{\mm}{\mathsf{m}}
\newcommand{\eps}{\varepsilon}

\newcommand{\Cc}{C_{\mathsf{c}}}

 \newtheorem{theorem}{Theorem}[section]
 \newtheorem{lemma}[theorem]{Lemma}
 \newtheorem{proposition}[theorem]{Proposition}
 \newtheorem{corollary}[theorem]{Corollary}

 \newtheorem{definition}[theorem]{Definition}
 \newtheorem{example}[theorem]{Example}
  \newtheorem{remark}[theorem]{Remark}

\begin{document}
\title[Continuous diffraction]{Continuous diffraction spectrum and the uniform vanishing of Fourier--Bohr coefficients}

\author{Nicolae Strungaru}
\address{Department of Mathematics and Statistics, MacEwan University \\
10700 -- 104 Avenue, Edmonton, AB, T5J 4S2, Canada\\
and \\
Institute of Mathematics ``Simon Stoilow''\\
Bucharest, Romania}
\email{strungarun@macewan.ca}
\urladdr{https://sites.google.com/macewan.ca/nicolae-strungaru/home}

\begin{abstract}
In this paper we review the connection among continuity of the diffraction spectrum, the (uniform) vanishing of the Fourier--Bohr coefficients and the so called consistent phase frequency.
\end{abstract}

\maketitle

\section{Introduction}

A long standing question in the area of Aperiodic Order is to characterize all measures with pure point or continuous diffraction spectrum, respectively.

Inspired by the discovery of quasicrystals in the 1980's \cite{She}, the question of which measures have pure point spectrum is now well understood.
Over the years, various characterisations have been provided. Under various assumptions, pure point diffraction has been characterized in terms of almost periodicity of the
so called autocorrelation measure in \cite{ARMA,BM,MoSt}, in terms of pure point dynamical spectrum \cite{LMS,BL,Gou} and in terms of compactness of the hull in various topologies \cite{Gou2,MS}.
More recently pure point diffraction has been shown to be equivalent to mean almost periodicity of the underlying structure \cite{LSS}. This result generalizes partial work in this direction by \cite{BM,Gou2,Que}.

The so called Consistent Phase Property(CPP) plays an important role in the study of diffraction, especially in the pure point case, but not only. The consistent phase property simply asks if the intensity of the Bragg peak at position $\chi \in \widehat{G}$ is the same as the absolute value square of the Fourier--Bohr coefficient at $\chi$ (see below for details). More precisely, given a translation bounded measure $\mu$, with autocorrelation $\gamma$ along a van Hove sequence,
we say that the (CPP) holds at $\chi$ if the Fourier--Bohr coefficient $a_\chi^\cA(\mu)$ exists at $\chi$ and satisfies
\begin{equation*}
\reallywidehat{\gamma}(\{\chi\}) \, =\,  \left| a_\chi^\cA(\mu)\right|^2  \qquad \,.\tag{CPP}
\end{equation*}
For $G=\RR^d$, it was shown by Hof \cite{HOF} that the uniform existence of the Fourier--Bohr coefficients implies the (CPP), and later the result was shown in general. While the (CPP) may fail for individual measures, it holds almost surely for ergodic dynamical systems \cite{DL}, and it does hold for all elements in the dynamical system if the system is uniquely ergodic and the corresponding eigenfunction can be chosen to be continuous \cite{DL}.

The (CPP) also plays an important role for measures with pure point diffraction, as in this case it is the identifying feature of Besicovitch almost periodicity \cite{LSS}.

\smallskip

On another hand, systems with continuous diffraction spectrum are less understood. Even the concept of continuous diffraction spectrum does not seem to be precisely defined. While continuous diffraction spectrum usually means that the diffraction measure $\widehat{\gamma}$ is a continuous measure (or equivalently that there are no Bragg peaks), when studying Delone sets, or more generally positive measures of positive lower density, the diffraction always shows a trivial Bragg peak in the origin. In this situation, some people understand the diffraction to be continuous if the trivial Bragg peak is the only Bragg peak in the spectrum. This is the case for example in the Pinwheel tiling \cite{MPS}, as well as for various random point processes \cite[Example~ 11.6, Example~11.7,Theorem~11.4,Theorem~11.6]{TAO} (compare \cite{BBG,BBM}). To differentiate between the two notions, we will refer to the diffraction without the Bragg peak in the origin as the Bartlett diffraction spectrum (see Definition~\ref{def-Bart} and Remark~\ref{rem-Bart} below). In particular, we will say that the \emph{diffraction spectrum is continuous} if $\widehat{\gamma}$ is continuous, and that the \emph{Bartlett diffraction spectrum is continuous} if $\widehat{\gamma}$ contains at most one Bragg peak in the origin.

It follows immediately from \cite{ARMA,MoSt} that the diffraction spectrum is continuous if and only if the autocorrelation is a so-called null-weakly almost periodic measure (see \cite{ARMA,MoSt} for definition and properties), and that
the Bartlett diffraction spectrum is continuous if and only if there exists a constant $C$ (the intensity of the Bragg peak in the origin) such that $\gamma-C \theta_G$ is a null-weakly almost periodic measure.

Similarly to the characterisation of pure point measures, one would like to characterize continuous (Bartlett) diffraction spectrum in terms of properties of the underlying structure, and not via properties of the autocorrelation measure. There are some results in this direction, usually connecting the continuous (Bartlett) diffraction spectrum with vanishing of the Fourier--Bohr coefficients, which are often hidden in the literature or are unstated immediate consequences of other results. It is our goal here to review  in a systematic way the connection between the continuity of the (Bartlett) diffraction spectrum and vanishing of the Fourier--Bohr coefficients.

\smallskip
It turns out that the (CPP), continuity of (Bartlett) diffraction spectrum and the vanishing of the (non-trivial) Fourier--Bohr coefficients are strongly connected. Indeed, given a measure $\mu$ whose autocorrelation $\gamma$ exists along a sequence $\cA$, consider the following three properties:

\begin{itemize}
  \item[(A)]  The (Bartlett) diffraction spectrum continuous .
  \item[(B)]  $a_\chi^\cA(\mu)$ exists and is zero for all $\chi \in \widehat{G}$ (respectively for all $0 \neq \chi \in \widehat{G}$ ).
  \item[(C)]  (CPP) holds for all $\chi \in \widehat{G}$ (respectively for all $0 \neq \chi \in \widehat{G}$ ).
\end{itemize}

Then, any two of these conditions imply the third.

In particular, assuming the (CPP), we get the equivalence between (A) and (B). While this looks like a nice result, in general, it does not seem easy to decide when the (CPP) holds for an arbitrary measure $\mu$, especially when the diffraction is not pure point.

There are only two general results we know on the (CPP). The first one, going back to Hof, says that the (CPP) always holds if the Fourier--Bohr coefficients exist uniformly \cite{HOF,HOF1}. This is in turn equivalent to the continuity of eigenfunctions \cite{LSS}, and often fails.  The second result, due to Lenz, says that if $\mm$ is an ergodic measure then the (CPP) holds for $\mm$-almost all $\omega \in \XX(\mu)$. But unfortunately this does not tell us if (CPP) holds for our original measure $\mu$, and there are known examples where this is not the case.

\smallskip

More recently, it was shown in \cite{LS} that if there is no Bragg peak at $\chi \in \widehat{G}$ then the Fourier--Bohr coefficient exists and is zero. In particular, (A) always implies both (B) and (C). Another interesting consequence of this result is that (A) holding for \textbf{all} diffraction measures is actually equivalent (B) holding uniformly (see Proposition~\ref{Prop1} below).

The main goal of this paper is to expand on this result. On the way, we obtain the following Theorem, which provides  more conditions that are equivalent to uniform vanishing of the Fourier--Bohr coefficient at a single character $\chi$.

\smallskip

\textbf{Theorem}~\ref{T1} Let $\mu \in \cM^\infty(G)$ and let $\cA$ be a van Hove sequence. Let $\chi \in \widehat{G}$. Then, the following are equivalent:
\begin{itemize}
\item[(i)] The Fourier--Bohr coefficient $a_\chi^{\cA}(\mu)$ are uniformly vanishing along $\cA$.
\item[(ii)] For all van Hove sequence $\cB$, Fourier--Bohr coefficient $a_\chi^{\cB}(\mu)$ is uniformly vanishing.
\item[(iii)]  The Fourier--Bohr coefficient $a_\chi^{\cA}(\omega)$ is uniformly vanishing for all $\omega \in \XX(\mu)$.
\item[(iv)] For each van Hove sequence $\cB$ the following Fourier--Bohr coefficient exists and
\[
a_\chi^\cB(\mu)\,=\,0 \,.
\]
\item[(v)]If the Fourier--Bohr coefficient $a_\chi^\cB(\mu)$ exists along some van Hove sequence $\cB$ then
  \[
a_\chi^\cB(\mu)\,=\,0 \,.
\]
\item[(vi)] For every autocorrelation $\gamma^{}_{\mu}$ of $\mu$ with respect to some van Hove sequence $\cB$ we have
  \[
  \reallywidehat{\gamma^{}_{\mu}}(\{ \chi \})\,=\,0 \,.
  \]
\item[(vii)] For all $G$-invariant probability measures $\mm$ on $\XX(\mu)$ we have
\[
\reallywidehat{\gamma_{\mm}}(\{ \chi \})\,=\,0 \,.
\]
\item[(viii)] For all ergodic measures $\mm$ on $\XX(\mu)$ we have
\[
\reallywidehat{\gamma_{\mm}}(\{ \chi \})\,=\,0 \,. 
\]
\item[(ix)] For every $\omega \in \XX(\mu)$ and every autocorrelation $\gamma_{\omega}^{}$ of $\omega$ along some van Hove sequence $\cB$ we have
  \[
  \reallywidehat{\gamma^{}_{\omega}}(\{ \chi \})\,=\,0 \,.\tag*{$ \qed $}
  \]
\end{itemize}

\smallskip

As consequences, we obtain in Theorem~\ref{T11} and Theorem~\ref{T11} a characterisation for the continuity of all the (Bartlett) diffraction spectra of a measure $\mu$, which generalizes  \cite{LS3}.
A result in the spirit of Theorem~\ref{T11} and Theorem~\ref{T111}, but holding for one a fixed choice of the autocorrelation/van Hove sequence would be more interesting and helpful, but seems to be out of our reach for now.

\section{Preliminaries}

Throughout this paper $G$ is a second countable LCAG. As usual we denote by $\Cc(G)$ the space of compactly supported continuous functions on $G$ and by $\cM(G)$ the space of (Radon) measures on $G$.

For a function $f$ and some $t \in G$ we denote by $\tau^{}_tf$ the translation of $f$ by $t$, that is
\[
\tau^{}_t f(x) \,:=\, f(x-t) \,.
\]
We denote by $\tilde{\,}$ the involution
\[
\tilde{f}(t)\, := \,\overline{f(-t)} \,.
\]
Same way, for a measure $\mu \in \cM(G)$ and $t \in G$ we define
\begin{align*}
\tau^{}_t\mu(\varphi)\, &:=\, \mu(\tau_{-t}\varphi) \qquad \forall \varphi \in \Cc(G) \\
\tilde{\mu}(\varphi)\,&:=\, \overline{ \mu(\tilde{\varphi})} \qquad \forall \varphi \in \Cc(G)  \,.
\end{align*}

\subsection{Autocorrelation and  diffraction of a measure}

Recall here that a measure $\mu \in \cM(G)$ is called \emph{translation bounded} if for one (and hence all \cite{BM}) precompact Borel set $B$ with non-empty interior we have
\[
\| \mu \|^{}_{B} \, := \, \sup_{t \in G} \left| \mu \right|(t+B) <\infty \,.
\]
The space of translation bounded measures is denoted by $\cM^{\infty}_{}(G)$.

\smallskip
Next, recall \cite{Sch} that a sequence $\cA=(A^{}_n)^{}_n$ of pre-compact Borel subsets of $G$ is called a \emph{van Hove sequence} if for all compact sets $K \subseteq G$ we have
\[
\lim^{}_{n \to \infty} \frac{|\partial_{}^{K} A^{}_{n}|}{|A^{}_{n}|}  \, = \,  0
\]
where the \emph{$K$-boundary $\partial^K(A)$} of a set $A$ is defined as
\[
\partial^{K} A \, := \,  \bigl( \overline{A+K}\setminus A\bigr) \cup
\bigl((\left(G \backslash A\right) - K)\cap \overline{A}  \bigr) \,.
\]

Let us now recall the following definition.

\begin{definition} We say that the \emph{autocorrelation $\gamma^{}_\mu$} of $\mu \in \cM(G)$ exists along the van Hove sequence $\cA$ if in the vague topology we have
\[
\gamma^{}_\mu \,: =\,  \lim^{}_{n \to \infty} \frac{1}{|A_n|} \left( \mu|^{}_{A^{}_n} \right)* \widetilde{\left( \mu|^{}_{A^{}_n} \right)} \,.
\]
As the limit of positive definite measures, $\gamma^{}_\mu$ is positive definite, and hence it has a \emph{Fourier transform $\reallywidehat{ \gamma^{}_\mu}$} \cite{ARMA1,BF,MoSt} which is a positive measure.

The measure $\reallywidehat{ \gamma^{}_\mu}$ is called the \emph{diffraction spectrum of $\mu$ along $\cA$}.
\end{definition}

Let us also introduce the following definition, which was suggested to us by Michael Baake.

\begin{definition}\label{def-Bart}
The measure
\[
\Gamma^{}_{\mu} \, :=\,   \reallywidehat{ \gamma^{}_\mu} - \left(\reallywidehat{ \gamma^{}_\mu} (\{0\}) \delta_0 \right)
\]
is called the \emph{Bartlett diffraction spectrum of $\mu$}.
\end{definition}

\begin{remark}\label{rem-Bart} \cite[Remark~15]{BBM} In the case of random point processes, the measure $\sigma$ is exactly the so called Bartlett spectrum of the corresponding stationary random measure (compare \cite[Sect~5]{DV}, \cite{Gou}). \exend

\end{remark}

For translation-bounded measures, by eventually passing to a subsequence, we can always assume that the autocorrelation exists. Indeed, the following result can be found in many places, see for example \cite[Theorem 4.14(a)]{LSS3}.

\begin{lemma} Let $\mu \in \cM^\infty_{}(G)$ and let $\cA$ be a van Hove sequence. Then, the autocorrelation $\gamma_\mu$ exists along a subsequence of $\cA$. \qed
\end{lemma}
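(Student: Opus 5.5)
The statement is the standard compactness fact: for $\mu\in\cM^\infty(G)$ and a van Hove sequence $\cA=(A_n)_n$, the measures
\[
\gamma_n \,:=\, \frac{1}{|A_n|}\left(\mu|_{A_n}\right) * \widetilde{\left(\mu|_{A_n}\right)}
\]
have a vaguely convergent subsequence. The argument has two parts. First we show that the family $\{\gamma_n\}$ is uniformly translation bounded, i.e.\ bounded in the norm $\|\cdot\|_K$ for a fixed compact $K$ with non-empty interior. Second, we invoke the fact that such a norm-bounded set is vaguely precompact and metrizable, which yields a convergent subsequence.

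For the first part, fix a compact symmetric neighbourhood $K$ of $0$. Then for every $t\in G$ we have
\[
|\gamma_n|(t+K) \,\le\, \frac{1}{|A_n|}\int_{A_n} |\mu|(t+K+s)\, \dd|\mu|(s) \,\le\, \frac{|\mu|(A_n)}{|A_n|}\, \|\mu\|_K \,.
\]
It therefore suffices to bound $|\mu|(A_n)/|A_n|$ uniformly in $n$. To do this, cover $G$ by translates $x+K$ with $x$ running over a uniformly discrete set $\Lambda$, chosen so that the sets $x+V$ are disjoint for a smaller neighbourhood $V$ with $V-V\subseteq K$. Then
\[
|\mu|(A_n) \,\le\, \#\{x\in\Lambda : (x+K)\cap A_n\neq\emptyset\}\, \|\mu\|_K \,.
\]
The number of such $x$ is at most $|A_n+K-V|/|V|$, because the sets $x+V$ are disjoint and all lie in $A_n+K-V$. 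By the van Hove property, $|A_n+K-V|\le |A_n|+|\partial^{K-V}A_n|$, and $|\partial^{K-V}A_n| = o(|A_n|)$. This gives the uniform bound $\sup_n |\mu|(A_n)/|A_n| <\infty$.

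For the second part, recall that for a fixed compact $K$ and constant $C$, the set $\{\nu\in\cM(G): \|\nu\|_K\le C\}$ is vaguely compact; this is a Banach–Alaoglu type argument, since $|\nu(\varphi)|$ is bounded by a constant depending only on $\varphi$ and $C$. Because $G$ is second countable, $\Cc(G)$ contains a countable set that is dense in the inductive limit sense. Hence the vague topology restricted to this bounded set is metrizable, and a diagonal argument produces a subsequence along which $\gamma_n(\varphi)$ converges for every $\varphi$ in the countable set. By uniform boundedness this extends to all $\varphi\in\Cc(G)$.

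The only step needing care is the uniform density bound $\sup_n |\mu|(A_n)/|A_n|<\infty$. This is exactly where the van Hove condition, rather than mere precompactness, is used: for general sets $A_n$ a boundary layer of small measure could carry a lot of mass, and the van Hove condition rules this out.
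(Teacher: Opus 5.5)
Your proof is correct and follows the standard route behind the paper's citation of \cite[Theorem~4.14(a)]{LSS3}: the van Hove property gives $\sup_n |\mu|(A_n)/|A_n|<\infty$, hence the $\gamma_n$ are equi-translation bounded, and vague precompactness plus metrisability of such bounded sets (cf.\ $\cM_{C,V}(G)$ and \cite[Theorem~2]{BL}) yield a convergent subsequence. One small slip: for compact $K$ the set $\{\nu : \|\nu\|_K\le C\}$ is only vaguely precompact, not closed (take $\delta_0+\delta_{1+1/n}\to\delta_0+\delta_1$ with $K=[0,1]$ and $C=1$), so either use an open precompact $V$ as in $\cM_{C,V}(G)$, or note, as your diagonal argument already shows, that the limit functional is bounded on each $C_L(G)$ and is therefore a Radon measure.
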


\subsection{Fourier--Bohr coefficients}

In this section we will briefly recall the Fourier--Bohr coefficients and their properties.

\begin{definition} Given a measure $\mu$, a van Hove sequence $\cA$ and $\chi \in \widehat{G}$, we say that the \emph{Fourier--Bohr coefficient $a_\chi^\cA(\mu)$} exists along $\cA$ if the following limit exists:
\[
a_\chi^\cA(\mu)\, :=\,\lim^{}_{n \to \infty} \frac{1}{|A_n|} \int^{}_{A_n} \overline{\chi(t)} \dd \mu(t) \,.
\]
We say that  the \emph{Fourier--Bohr coefficient $a_\chi^\cA(\mu)$ exists uniformly} along $\cA$ if the following limit exists uniformly in $s$:
\[
a_\chi^\cA(\mu)\, :=\, \lim^{}_{n \to \infty} \frac{1}{|A_n|} \int^{}_{s+A_n} \overline{\chi(t)} \dd \mu(t) \,.
\]
For a function $f \in L^1_{loc}(G)$ we say that the \emph{Fourier--Bohr coefficient $a_\chi^\cA(f)$ exists (uniformly)} along $\cA$ if the Fourier--Bohr coefficient $a_{\chi}^\cA(\mu)=: a_\chi^\cA(f)$ exists (uniformly), where $\mu= f \theta_G$ is the absolutely continuous measure with density function $f$.
\end{definition}

Given $\mu \in \cM^\infty(G)$ and a van Hove sequence $\cA$, then there exists a subsequence $\cB$ of $\cA$ along which all Fourier--Bohr coefficients exist \cite[Corollary~7.4]{LS}. Therefore, exactly as with the autocorrelation, the existence of the Fourier--Bohr coefficients can be assumed by changing the van Hove sequence.

Below we will need the following results of \cite{LSS}.

\begin{proposition}\cite[Corollary~1.11]{LSS}\label{Lem-FB} Let $\mu \in \cM^\infty_{}(G)$, $\cA$ be a van Hove sequence and $\chi \in \widehat{G}$. Then,
\begin{itemize}
  \item[(a)] If $a_{\chi}^\cA(\mu)$ exists then $a_{\chi}^\cA(\mu*\varphi)$ exists for all $\varphi \in \Cc(G)$ and
  \[
  a_{\chi}^\cA(\mu*\varphi)\, =\, \widehat{\varphi}(\chi) a_{\chi}^\cA(\mu) \,.
  \]
  \item[(b)] If $a_{\chi}^\cA(\mu)$ exists uniformly then $a_{\chi}^\cA(\mu*\varphi)$ exists uniformly for all $\varphi \in \Cc(G)$ and
  \[
  a_{\chi}^\cA(\mu*\varphi)\,=\,\widehat{\varphi}(\chi) a_{\chi}^\cA(\mu) \,.
  \]
  \item[(c)] If $a_{\chi}^\cA(\mu*\varphi)$ exists for some $\varphi \in \Cc(G)$ with $\widehat{\varphi}(\chi) \neq 0$ then $a_{\chi}^\cA(\mu)$ exists  and
  \[
  a_{\chi}^\cA(\mu*\varphi)\, =\, \widehat{\varphi}(\chi) a_{\chi}^\cA(\mu) \,.
  \]
   \item[(d)] If $a_{\chi}^\cA(\mu*\varphi)$ exists uniformly for some $\varphi \in \Cc(G)$ with $\widehat{\varphi}(\chi) \neq 0$ then $a_{\chi}^\cA(\mu)$ exists uniformly and
  \[
  a_{\chi}^\cA(\mu*\varphi)\, =\, \widehat{\varphi}(\chi) a_{\chi}^\cA(\mu) \,.\tag*{$ \qed $}
  \]
\end{itemize}
\end{proposition}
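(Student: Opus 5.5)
The four statements all follow from a single asymptotic identity. With $K:=\operatorname{supp}(\varphi)$, I plan to show that
\[
\frac{1}{|A_n|} \int_{s+A_n} \overline{\chi(t)} \, \dd (\mu*\varphi)(t) \;-\; \widehat{\varphi}(\chi)\, \frac{1}{|A_n|} \int_{s+A_n} \overline{\chi(t)} \, \dd \mu(t) \;\longrightarrow\; 0
\]
as $n\to\infty$, uniformly in $s \in G$. Here $\widehat{\varphi}(\chi)=\int_G \varphi(y)\overline{\chi(y)}\,\dd y$.

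Given this identity, the four parts are immediate.
\begin{itemize}
\item Parts (a) and (b) follow by taking $s=0$, respectively taking the limit uniformly in $s$.
\item For (c) and (d), divide by $\widehat{\varphi}(\chi)\neq 0$. The averages of $\mu$ then differ from $\widehat{\varphi}(\chi)^{-1}$ times the averages of $\mu*\varphi$ by a term that tends to zero (uniformly in $s$). So existence, or uniform existence, transfers from $\mu*\varphi$ to $\mu$, with the same formula.
\end{itemize}

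To prove the identity, I write $(\mu*\varphi)(t)=\int_G \varphi(t-x)\,\dd\mu(x)$ and apply Fubini. This is legitimate because $\varphi$ is compactly supported, $s+A_n$ is precompact and $\mu$ is a Radon measure. The substitution $t=x+y$ and the multiplicativity $\overline{\chi(x+y)}=\overline{\chi(x)}\,\overline{\chi(y)}$ give
\[
\int_{s+A_n} \overline{\chi(t)}\, \dd(\mu*\varphi)(t) \,=\, \int_G \overline{\chi(x)} \left( \int_G 1_{s+A_n}(x+y)\, \varphi(y)\overline{\chi(y)} \,\dd y \right) \dd\mu(x) \,.
\]
Write $h_n(x)$ for the inner integral. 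If $x\in s+A_n$ and $x+K\subseteq s+A_n$, then $h_n(x)=\widehat{\varphi}(\chi)$. If $x\notin s+A_n$ and $(x+K)\cap(s+A_n)=\emptyset$, then $h_n(x)=0$. In every remaining case, $x$ lies in a boundary set of the form $\partial^{K'}(s+A_n)=s+\partial^{K'}A_n$, where $K'=K\cup(-K)$ is compact; I will need to check the sign conventions in the definition of $\partial^K$ to confirm this. At such $x$ we have $|h_n(x)-\widehat{\varphi}(\chi)1_{s+A_n}(x)|\le 2\|\varphi\|_1$. Hence the difference in the displayed limit is bounded by
\[
\frac{2\|\varphi\|_1}{|A_n|}\, |\mu|\big(s+\partial^{K'}A_n\big)\,.
\]

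The main obstacle is to bound $|\mu|(s+\partial^{K'}A_n)$ by a constant times $|\partial^{K''}A_n|$ for some compact $K''$, with a constant independent of $s$. This is where translation boundedness enters. Fix a precompact open neighbourhood $V$ of $0$. Averaging $|\mu|(t+V)$ over $t\in B-V$ gives, for every Borel set $B$,
\[
|\mu|(B)\,\le\,\frac{1}{|V|}\int_{B-V}|\mu|(t+V)\,\dd t\,\le\, \frac{\|\mu\|_V}{|V|}\,|B-V| \,.
\]
I will then use the standard inclusion $\partial^{K'}A-\overline{V}\subseteq \partial^{K'+\overline{V}-\overline{V}}A$, which has to be checked carefully from the definition of the $K$-boundary, together with the translation invariance of Haar measure. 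This yields $|\mu|(s+\partial^{K'}A_n)\le C\,|\partial^{K''}A_n|$ with $C$ independent of $s$. The van Hove property then gives $|\partial^{K''}A_n|/|A_n|\to 0$, which completes the argument.
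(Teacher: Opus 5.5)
The paper does not prove this statement; it quotes it from \cite[Corollary~1.11]{LSS}. Your argument is a correct, self-contained proof by the standard van Hove boundary estimate. It reduces all four parts to the uniform-in-$s$ estimate. The error term is carried by $|\mu|$ on $s+\partial^{K\cup(-K)}A_n$, and translation boundedness together with the van Hove property make this $o(|A_n|)$ uniformly in $s$.

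One auxiliary step needs a small repair. The inclusion $\partial^{K'}A-\overline{V}\subseteq\partial^{K'+\overline{V}-\overline{V}}A$ can fail when $0\notin K'$, which happens whenever $0\notin\operatorname{supp}(\varphi)$. For example, in $\RR$ take $A=[-100,100]\setminus(4.5,5.5)$, $K=[4.9,5.1]$ and $V=(-1,1)$:
\begin{itemize}
\item $5\in\overline{A+K'}\setminus A\subseteq\partial^{K'}A$, so $4=5-1$ lies in the left-hand side;
\item $4\in A$ and $4+(K'+[-2,2])\subseteq A$, so $4$ is not in $\partial^{K'+[-2,2]}A$.
\end{itemize}

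Since $\partial^{K}A$ is monotone in $K$, simply take $K'=K\cup(-K)\cup\{0\}$. With $0\in K'$ and $0\in\overline{V}$, the set $K'+\overline{V}-\overline{V}$ contains $K'\pm\overline{V}$ and $\pm\overline{V}$. A four-case check from the definition of the $K$-boundary then gives the inclusion, and the rest of your argument goes through unchanged.
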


Next, let us recall that the uniform existence of Fourier--Bohr coefficients is equivalent to their existence along all van Hove sequences. Indeed, we have:

\begin{proposition}\cite[Corollary~1.12]{LSS}\label{cor2} Let $\mu \in \cM^\infty_{}(G)$, $\cA$ be a van Hove sequence and $\chi \in \widehat{G}$. Then, the following are equivalent:
\begin{itemize}
  \item[(i)] $a_{\chi}^\cA(\mu)$ exists uniformly.
  \item[(ii)] $a_{\chi}^\cB(\mu)$ exists for all van Hove sequences $\cB$.
  \item[(iii)] $a_{\chi}^\cB(\mu)$ exists uniformly for all van Hove sequences $\cB$.
\end{itemize}
Moreover, in this case, for all van Hove sequences $\cB$ we have
\[
a_{\chi}^\cB(\mu)\, = \, a_{\chi}^\cA(\mu) \,.
\]
\end{proposition}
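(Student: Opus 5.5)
The statement just above is Proposition~\ref{cor2}, quoted from \cite[Corollary~1.12]{LSS}. To prove it I would use the standard "mean over translates" argument, combined with Proposition~\ref{Lem-FB}, which lets me pass to smoothed functions. The implications (iii)$\Rightarrow$(i) and (iii)$\Rightarrow$(ii) are trivial. So the work is (i)$\Rightarrow$(iii) and (ii)$\Rightarrow$(i), together with the independence of the limit from the van Hove sequence.

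\textbf{Reduction to bounded uniformly continuous functions.} Fix $\varphi\in\Cc(G)$ with $\widehat{\varphi}(\chi)\neq 0$; this is possible since $\widehat{\Cc(G)}$ separates points. Then $f:=\mu*\varphi$ is a bounded uniformly continuous function, because $\mu$ is translation bounded. Parts (b) and (d) of Proposition~\ref{Lem-FB} show that uniform existence of $a_\chi^\cA(\mu)$ is equivalent to that of $a_\chi^\cA(f)$, and parts (a) and (c) give the same equivalence for plain existence along any $\cB$. In each case the values differ by the factor $\widehat{\varphi}(\chi)$. So it suffices to prove the proposition for $g:=\overline{\chi}f\in C_u(G)$ and the trivial character, i.e.\ for plain means $\frac{1}{|A_n|}\int_{s+A_n} g$.

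\textbf{(i)$\Rightarrow$(iii).} Assume $M_n(s):=\frac{1}{|A_n|}\int_{s+A_n} g\to a$ uniformly in $s$. Let $\cB$ be another van Hove sequence and fix $n$. I would average over $B_m$:
\[
\frac{1}{|B_m|}\int_{B_m} g(t)\,\dd t\;\approx\;\frac{1}{|B_m|}\int_{B_m} M_n(s)\,\dd s .
\]
The right-hand side is a Fubini rewrite of $\frac{1}{|B_m||A_n|}\int_{B_m}\int_{s+A_n}g$. The two sides differ by at most
\[
\|g\|_\infty\,\frac{|\partial^{\overline{-A_n}\cup\overline{A_n}}B_m|}{|B_m|},
\]
which tends to $0$ as $m\to\infty$ with $n$ fixed, by the van Hove property of $\cB$ and compactness of $\overline{A_n}$. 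The average of $M_n$ lies within $\sup_s|M_n(s)-a|$ of $a$. Taking first $m\to\infty$ and then $n\to\infty$ gives $\frac{1}{|B_m|}\int_{B_m} g\to a$. The same estimate with $B_m$ replaced by $x+B_m$ is uniform in $x$, because the boundary bound is translation invariant. This gives uniform existence along $\cB$ with the same value $a$, and hence the final "moreover" statement.

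\textbf{(ii)$\Rightarrow$(i), the main obstacle.} First, all the limits $a_\chi^\cB$ coincide: given two van Hove sequences, interleave them into a single van Hove sequence; its limit exists by (ii) and must equal both. Call the common value $a$. Now suppose uniformity fails along $\cA$. Then there are $\eps>0$, a subsequence $n_k$ and points $s_k$ with $|M_{n_k}(s_k)-a|\ge\eps$. The sequence $(s_k+A_{n_k})_k$ is again van Hove, since the $K$-boundary is translation covariant and Haar measure is invariant. By (ii) its means converge to $a$, a contradiction. The delicate points here are checking that interleaving and translating preserve the van Hove property, and handling passage to subsequences cleanly. I expect no further difficulty once the problem has been reduced to $C_u$ functions.
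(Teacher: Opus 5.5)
Your proof is correct, but it takes a different route from the paper. The paper does not prove the equivalence itself: it cites \cite[Corollary~1.12]{LSS} for (i)$\Leftrightarrow$(ii)$\Leftrightarrow$(iii). It only argues the ``moreover'' clause, by interleaving $\cA$ and $\cB$ into one van Hove sequence $\cC$ and applying (ii) to $\cC$; this is the same trick you use to show that all the limits coincide.

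You instead prove the equivalence directly, in three steps:
\begin{itemize}
\item[(1)] You smooth by $\varphi$ (Proposition~\ref{Lem-FB}) to reduce to the bounded function $g=\overline{\chi}\,(\mu*\varphi)$.
\item[(2)] For (i)$\Rightarrow$(iii) you use the Fubini averaging estimate. I checked it: $\bigl||B_m\cap(t-A_n)|-1_{B_m}(t)|A_n|\bigr|\le |A_n|$, and this quantity vanishes off $\partial^{K_n}B_m$ with $K_n=\overline{A_n}\cup(-\overline{A_n})$. This gives exactly your error term $\|g\|_\infty|\partial^{K_n}B_m|/|B_m|$, and the bound is uniform under translation of $B_m$.
\item[(3)] For (ii)$\Rightarrow$(i) you use the translated van Hove sequence $(s_k+A_{n_k})_k$.
\end{itemize}

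What your route buys is a self-contained explanation. The equivalence becomes a soft consequence of two facts: the class of van Hove sequences is closed under translating individual members, passing to subsequences and interleaving; and the classical F\o lner averaging estimate holds. The smoothing step neatly replaces the usual lemma bounding $|\mu|(\partial^K B_m)/|B_m|$ for translation bounded $\mu$. It is not even needed for (ii)$\Rightarrow$(i), which works verbatim for $\mu$. Your argument still relies on \cite{LSS} through Proposition~\ref{Lem-FB}, but only for that elementary smoothing identity. The paper's route buys brevity by outsourcing the whole argument.

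One small imprecision: the existence of $\varphi\in\Cc(G)$ with $\widehat{\varphi}(\chi)\neq 0$ is not a point-separation statement. Simply take $\varphi=\chi\psi$ (or $\overline{\chi}\psi$, depending on the convention for $\widehat{\varphi}$) with $0\le\psi\in\Cc(G)$ and $\psi\neq 0$.
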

\begin{proof}
The equivalence follows from \cite[Corollary~1.12]{LSS}. The last claim is immediate. Indeed, if $\cA=(A^{}_n)^{}_n$ and $\cB=(B^{}_n)^{}_n$, then so is $\cC=(C^{}_n)^{}_n$ where
\[
C_n\, := \,
\begin{cases}
A_m & \mbox{ if } n=2m  \\
B_m & \mbox{ if } n=2m+1  \,.
\end{cases}
\]
Since $\cA,\cB$ are subsequences of $\cC$ and $a_{\chi}^\cC(\mu)$ exists by (ii) then
\[
a_{\chi}^\cA(\mu)\, =\, a_{\chi}^\cC(\mu)\, =\, a_{\chi}^\cB(\mu) \,. \qedhere
\]
\end{proof}

Because of this result, whenever the Fourier--Bohr coefficient exists uniformly we write $a^{}_\chi(\mu)$ and not specify the van Hove sequence.

\medskip

We complete the section by recalling the following  results relating the Fourier--Bohr coefficients with the intensity of Bragg peaks. We start by recalling that the (CPP) holds at all points where the Fourier--Bohr coefficients exist uniformly. For $G=\RR_{}^d$ this result goes back to Hof \cite{HOF,HOF1}.

\begin{proposition} \cite[Theorem 4.1]{NS} Let $\mu$ be a measure and $\cA$ be a van Hove sequence along which the autocorrelation $\gamma^{}_\mu$ exists.

If $\chi \in \widehat{G}$ is so that the Fourier--Bohr coefficient $a^{}_\chi(\mu)$ exists uniformly, then
\[
\reallywidehat{\gamma^{}_\mu}(\{\chi\}) \, =\, \left| a^{}_\chi(\mu)\right|^2 \,. \tag*{$ \qed $}
\]
\end{proposition}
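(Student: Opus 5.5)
This is Hof's theorem in the general setting; the plan is to compute $\reallywidehat{\gamma^{}_\mu}(\{\chi\})$ as a mean of $\gamma^{}_\mu$ and then use the uniform existence of $a^{}_\chi(\mu)$ to factor that mean as a product.

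\textbf{Step 1: a Bragg intensity is a mean of $\gamma^{}_\mu$.} For a translation bounded positive definite measure $\gamma$ and a van Hove sequence $\cB$ we have
\[
\widehat{\gamma}(\{\chi\}) \,=\, \lim_{m\to\infty} \frac{1}{|B_m|}\int_{B_m} \overline{\chi(t)}\, \dd\gamma(t) \,.
\]
This is the standard statement that Bragg intensities are Fourier--Bohr coefficients of the autocorrelation; see \cite{ARMA,MoSt}. Since $\gamma^{}_\mu$ is positive definite, the weakly almost periodic decomposition makes the limit exist uniformly, so any van Hove sequence may be used. We can therefore write
\[
\reallywidehat{\gamma^{}_\mu}(\{\chi\})\,=\,a_\chi(\gamma^{}_\mu) \,.
\]
By Proposition~\ref{Lem-FB}, after convolving with $\varphi*\tilde\varphi$ for a fixed $\varphi\in\Cc(G)$ with $\widehat\varphi(\chi)\neq0$, it suffices to prove
\[
a_\chi(\gamma^{}_\mu*\varphi*\tilde\varphi) \,=\, |\widehat\varphi(\chi)|^2\,|a_\chi(\mu)|^2 \,.
\]

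\textbf{Step 2: identify the smoothed autocorrelation.} Put $f:=\mu*\varphi$. This is a bounded, uniformly continuous function, and $a_\chi(f)=\widehat\varphi(\chi)a_\chi(\mu)$ exists uniformly by Proposition~\ref{Lem-FB}(b). The van Hove property lets us replace the truncation of $\mu$ by a truncation of $f$ up to boundary terms. This gives
\[
\gamma^{}_\mu*\varphi*\tilde\varphi(x)\,=\,\lim_n \frac{1}{|A_n|}\int_{A_n} f(x+s)\overline{f(s)}\,\dd s \,.
\]
The convergence is uniform on compact sets in $x$, and the functions are uniformly bounded.

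\textbf{Step 3: exchange the two limits.} Take a further van Hove sequence $\cB$ and compute
\[
\frac{1}{|B_m|}\int_{B_m}\overline{\chi(x)}\,\frac{1}{|A_n|}\int_{A_n} f(x+s)\overline{f(s)}\,\dd s\,\dd x
\,=\, \frac{1}{|A_n|}\int_{A_n}\overline{f(s)}\,\chi(s)\left(\frac{1}{|B_m|}\int_{s+B_m}\overline{\chi(y)}f(y)\,\dd y\right)\dd s \,.
\]
By uniform existence, the inner bracket converges to $a_\chi(f)$ as $m\to\infty$, uniformly in $s$. So for every $\eps>0$ there is $m_0$ such that for $m\ge m_0$ the expression lies within $\eps\|f\|_\infty$ of
\[
a_\chi(f)\,\frac{1}{|A_n|}\int_{A_n}\overline{f(s)}\,\chi(s)\,\dd s \,,
\]
and this holds for all $n$ at once. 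Letting $n\to\infty$, the last average tends to $\overline{a_\chi(f)}$. On the left, for fixed $m$ we let $n\to\infty$ first; since $B_m$ is precompact, Step 2 gives convergence to $\frac{1}{|B_m|}\int_{B_m}\overline{\chi}\,(\gamma^{}_\mu*\varphi*\tilde\varphi)$. Then letting $m\to\infty$ yields $|a_\chi(f)|^2$ up to an error of $\eps\|f\|_\infty$. Since $\eps$ is arbitrary, the claim follows.

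\textbf{Main obstacle.} The delicate point is this exchange of limits. It works only because the $m$-limit is uniform in the shift $s$, which is exactly why uniform existence of $a_\chi(\mu)$ is assumed. A secondary technical point is the boundary estimate in Step 2, which relies on the van Hove property and the translation boundedness of $\mu$.
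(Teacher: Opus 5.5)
Your proof is correct. It is the standard Hof-type argument behind the cited result \cite[Theorem 4.1]{NS}; the paper gives no proof of its own, only that citation. You identify $\reallywidehat{\gamma_\mu}(\{\chi\})$ with the mean of $\overline{\chi}\gamma_\mu$, smooth by $\varphi*\widetilde{\varphi}$, and then use Fubini and the uniformity of $a_\chi(\mu*\varphi)$ in the shift $s$ to split the double mean into $|a_\chi(\mu*\varphi)|^2$. One point to make explicit: Step 2 (boundedness of $f=\mu*\varphi$ and the van Hove boundary estimate) uses $\mu\in\cM^\infty(G)$, which is the standing assumption in this setting even though the statement only says ``a measure''.
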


\smallskip Next, let us recall the so called the Bombieri--Taylor conjecture. It says that (CPP) automatically holds at all positions where there is no Bragg peak.

\begin{proposition}\cite[Proposition~7.1]{LS}\label{PropBT} Let $\mu \in \cM^\infty(G)$ and let $\cA$ be a van Hove sequence along which the autocorrelation $\gamma_{\mu}$ exists.

If $\chi \in \widehat{G}$ is so that $\reallywidehat{\gamma_\mu}(\{ \chi\})=0$. Then, the Fourier--Bohr coefficient $a_{\chi}^\cA(\mu)$ exists and is zero. In particular, the (CPP) holds at $\chi$.\qed
\end{proposition}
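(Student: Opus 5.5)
The plan is to reduce to a smoothed function, bound the squared Fourier--Bohr averages by the diffraction mass near $\chi$ using a van der Corput type averaging, and then let the averaging window grow.

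\textbf{Reduction to a smoothed function.} Fix $\varphi \in \Cc(G)$ with $\widehat{\varphi}(\chi)\neq 0$, and set $f:=\mu*\varphi$, which is a bounded uniformly continuous function. By Proposition~\ref{Lem-FB}(c) it suffices to show that $a_\chi^\cA(f)$ exists and equals $0$; then $a_\chi^\cA(\mu)$ exists and $\widehat{\varphi}(\chi)\,a_\chi^\cA(\mu)=0$. Write
\[
c_n:=\frac{1}{|A_n|}\int_{A_n}\overline{\chi(t)}f(t)\,\dd t \,.
\]
It is enough to prove that $\limsup_n |c_n|^2=0$.

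\textbf{Averaging over a fixed window.} Fix an auxiliary van Hove sequence $(B_m)_m$ and fix $m$. Since $f$ is bounded and $\cA$ is van Hove, we have
\[
c_n=\frac{1}{|A_n|}\int_{A_n}\frac{1}{|B_m|}\int_{B_m}\overline{\chi(t+s)}f(t+s)\,\dd s\,\dd t+o(1) \qquad (n\to\infty)\,.
\]
The error is controlled by $\|f\|_\infty\,|\partial^{\overline{B_m}}A_n|/|A_n|$. Cauchy--Schwarz in $t$ then gives
\[
\limsup_n|c_n|^2\le\limsup_n \frac{1}{|B_m|^2}\int_{B_m}\int_{B_m}\chi(s'-s)\,\frac{1}{|A_n|}\int_{A_n} f(t+s)\overline{f(t+s')}\,\dd t\,\dd s\,\dd s' \,.
\]
The inner average is, up to a van Hove boundary error, $\bigl(\gamma_n*\varphi*\widetilde{\varphi}\bigr)(s-s')$ with $\gamma_n=\frac{1}{|A_n|}\mu|_{A_n}*\widetilde{\mu|_{A_n}}$. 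Since $\gamma_n\to\gamma_\mu$ vaguely and the $\gamma_n$ are uniformly translation bounded, $\gamma_n*\psi\to\gamma_\mu*\psi$ uniformly on compacts, where $\psi:=\varphi*\widetilde\varphi$. Hence the right-hand side equals
\[
\frac{1}{|B_m|^2}\int_{B_m}\int_{B_m}\chi(s'-s)\,(\gamma_\mu*\psi)(s-s')\,\dd s\,\dd s' \,.
\]

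\textbf{Passing to the Fourier side.} The function $\gamma_\mu*\psi$ is continuous and positive definite, with Fourier transform the finite measure $|\widehat\varphi|^2\,\reallywidehat{\gamma_\mu}$. Rewriting the last expression via Bochner's theorem gives
\[
\int_{\widehat G}\Bigl|\frac{1}{|B_m|}\widehat{1_{B_m}}(\xi-\chi)\Bigr|^2|\widehat\varphi(\xi)|^2\,\dd\reallywidehat{\gamma_\mu}(\xi) \,.
\]
The integrand is bounded by $|\widehat\varphi(\xi)|^2$, which is integrable against $\reallywidehat{\gamma_\mu}$. As $m\to\infty$ it converges pointwise to $1_{\{\chi\}}(\xi)\,|\widehat\varphi(\xi)|^2$, by the van Hove property applied to the averages of the character $\xi-\chi$. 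Dominated convergence therefore yields
\[
\limsup_n|c_n|^2\le|\widehat\varphi(\chi)|^2\,\reallywidehat{\gamma_\mu}(\{\chi\})=0 \,.
\]
So $a_\chi^\cA(f)$ exists and equals $0$, and by the reduction step $a_\chi^\cA(\mu)=0$. The (CPP) at $\chi$ is then trivial, since both sides vanish.

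\textbf{Main obstacle.} The delicate step is the second one. It requires justifying the interchange of the $n$-limit with the double $B_m$ integral, that is, uniform convergence of $\gamma_n*\psi$ on the compact set $\overline{B_m}-\overline{B_m}$. It also requires checking that the boundary terms, from replacing $\frac{1}{|A_n|}\int_{A_n} f(t+s)\overline{f(t+s')}\,\dd t$ by $(\gamma_n*\psi)(s-s')$, vanish uniformly in $s,s'\in B_m$. Both should follow from translation boundedness of $\mu$ together with the van Hove property of $\cA$ applied to the fixed compact set $\overline{B_m}+\operatorname{supp}\varphi$.
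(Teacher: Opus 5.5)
Your proof is correct. The paper itself gives no argument here: it imports the statement from \cite[Proposition~7.1]{LS}. There, as the title of \cite{LS} indicates, the result comes out of the unitary-representation picture of diffraction and the orthogonality, with respect to the reflected Eberlein convolution, of measures with mutually singular diffraction. Roughly, one pairs $\mu$ with $\chi\theta_G$, whose diffraction is $\delta_\chi$, and the cross term is governed by the Fourier--Bohr averages of $\mu$ at $\chi$.

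You instead run a direct van der Corput / mean-ergodic computation, and the two proofs rest on the same mechanism. Your quantity $\frac{1}{|B_m|^2}\int_{B_m}\int_{B_m}\chi(s'-s)\,(\gamma_\mu*\psi)(s-s')\,\dd s\,\dd s'$ is exactly the squared norm in $L^2(\reallywidehat{\gamma_\mu})$ of $\xi\mapsto\widehat{\varphi}(\xi)\,\frac{1}{|B_m|}\int_{B_m}(\xi\overline{\chi})(s)\,\dd s$. This is a $\chi$-twisted ergodic average in the diffraction Hilbert space, and your dominated-convergence step is the spectral proof of von Neumann's mean ergodic theorem.

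What your route buys:
\begin{itemize}
\item It is self-contained. You only need vague convergence and uniform translation boundedness of the $\gamma_n$, plus Proposition~\ref{Lem-FB}(c). Even that last ingredient could be replaced by the elementary boundary estimate comparing $\frac{1}{|A_n|}\int_{A_n}\overline{\chi}\,(\mu*\varphi)$ with $\widehat{\varphi}(\chi)\frac{1}{|A_n|}\int_{A_n}\overline{\chi}\,\dd\mu$.
\item It is quantitative. Without any hypothesis on the Bragg peak, your argument proves
\[
\limsup_{n\to\infty}\Bigl|\frac{1}{|A_n|}\int_{A_n}\overline{\chi(t)}\,\dd\mu(t)\Bigr|^2\,\le\,\reallywidehat{\gamma_\mu}(\{\chi\}) \,.
\]
So every limit point of the Fourier--Bohr averages satisfies the ``$\le$'' half of the (CPP). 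When the right-hand side vanishes, existence of $a_\chi^{\cA}(\mu)$ along the whole of $\cA$ comes for free.
\end{itemize}

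The approach of \cite{LS} buys generality instead: an orthogonality principle for arbitrary pairs of measures of mutually singular spectral type, of which this proposition is one special case.

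The technical points you flag are routine consequences of translation boundedness and the van Hove property, as you expect. These are the uniform convergence of $\gamma_n*\psi$ on $\overline{B_m}-\overline{B_m}$ (pointwise convergence plus equicontinuity) and the uniformity in $s,s'\in B_m$ of the boundary errors.
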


Let us note here the following consequence of the Bombieri--Taylor.

\begin{corollary}\label{cor3} Let $\mu \in \cM^\infty(G)$ and let $\cA$ be a van Hove sequence along which the autocorrelation $\gamma_{\mu}$ exists.
\begin{itemize}
  \item[(a)] If the diffraction measure $\reallywidehat{\gamma_{\mu}}$ is continuous, then the Fourier--Bohr coefficient $a_{\chi}^\cA(\mu)$ exists and is zero for all $\chi \in \widehat{G}$.
 \item[(b)] If the Bartlett diffraction measure $\Gamma_{\mu}$ is continuous, then the Fourier--Bohr coefficient $a_{\chi}^\cA(\mu)$ exists and is zero for all $0 \neq \chi \in \widehat{G}$. \qed
\end{itemize} 
\end{corollary}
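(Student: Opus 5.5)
This is a direct application of Proposition~\ref{PropBT}, applied at every character $\chi$. The only work is to make sure that Proposition applies at each $\chi$ under the continuity hypothesis on $\reallywidehat{\gamma_\mu}$ or on $\Gamma_\mu$.

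For (a), I will first unpack what continuity of the diffraction measure means. Since $\reallywidehat{\gamma_{\mu}}$ is a positive measure, being continuous means it has no atoms, so $\reallywidehat{\gamma_\mu}(\{\chi\})=0$ for every $\chi \in \widehat{G}$. Fix $\chi$. The autocorrelation exists along $\cA$ and $\mu \in \cM^\infty(G)$, so all hypotheses of Proposition~\ref{PropBT} hold at $\chi$. That Proposition then says $a_\chi^\cA(\mu)$ exists and equals $0$. Since $\chi$ was arbitrary, (a) follows.

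For (b), I will relate the atoms of $\Gamma_\mu$ to those of $\reallywidehat{\gamma_\mu}$ away from the origin. Take $0 \neq \chi$. Then $\delta_0(\{\chi\})=0$, so
\[
\reallywidehat{\gamma_\mu}(\{\chi\})\,=\,\Gamma_\mu(\{\chi\})\,.
\]
Continuity of $\Gamma_\mu$ makes the right-hand side zero. Proposition~\ref{PropBT} then applies as in (a), giving that $a_\chi^\cA(\mu)$ exists and is zero for every $0\neq\chi$.

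There is no real obstacle here. The only point needing care is that continuity of a (positive) measure is read as absence of atoms, so every singleton has measure zero. For $\Gamma_\mu$, the subtraction of $\reallywidehat{\gamma_\mu}(\{0\})\delta_0$ changes the measure only at the origin, which is exactly why the conclusion in (b) is restricted to nontrivial characters.
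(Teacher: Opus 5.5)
Your proposal is correct and follows the paper's approach: the paper states this as an immediate consequence of Proposition~\ref{PropBT} (with no written proof), and your argument supplies exactly the two missing observations. These are that continuity means $\reallywidehat{\gamma_\mu}(\{\chi\})=0$ for every $\chi$, and that $\Gamma_\mu(\{\chi\})=\reallywidehat{\gamma_\mu}(\{\chi\})$ for $\chi\neq 0$; after that you apply Proposition~\ref{PropBT} pointwise.
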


\begin{remark} The converse implications do not hold in Corollary~\ref{cor3}. Indeed, consider the following measures on $\RR$:
\begin{align*}
  \mu \, &:= \, \sgn(x) \lambda \\
  \nu \, &:= \, \sgn(x) \Delta_{\ZZ} \,,
\end{align*}
where
\[
\sgn(x) \,:=\, \begin{cases}
            1, & \mbox{if } x>0 \\
            0, & \mbox{if } x=0 \\
            -1, & \mbox{otherwise} ,
          \end{cases}
\]
is the  sign function and $\lambda$ denotes the Lebesgue measure.

Let $A_n=[-n,n]$. Then,
\begin{itemize}
  \item[(a)] The autocorrelations $\gamma_{\mu}, \gamma_{\nu}$ exists with respect to $\cA=(A_n)_n$ and
  \begin{align*}
  \gamma_{\mu} \, &= \, \lambda\\
   \gamma_{\nu} \, &= \, \delta_{\ZZ}\,.
  \end{align*}
  In particular,
    \begin{align*}
 \reallywidehat{ \gamma_{\mu}} \, &=\,  \delta_0\\
 \reallywidehat{  \gamma_{\nu}} \, &=\, \delta_{\ZZ}\,.
  \end{align*}
  \item[(b)] The Bartlett diffraction spectrum of $\nu$ is
  \[
  \Gamma_{\nu} \, = \, \delta^{}_{\ZZ \backslash \{ 0\}} \,.
  \]
  \item[(c)] For all $y \in \RR=\widehat{\RR}$ the Fourier--Bohr coefficients $a_y^\cA(\mu), a_y^\cA(\nu)$ exist and
  \[
  a_y^\cA(\mu) \, = \, a_y^\cA(\nu) \,=\, 0 \,.\tag*{\hfill$\Diamond$}
  \]
\end{itemize}
\end{remark}

\smallskip

Finally, let us recall the following consequence of Bombieri--Taylor:

\begin{proposition}\cite[Corollary~7.8]{LS}\label{Prop1} Let $\mu \in \cM^{\infty}_{}$ and $\chi \in \widehat{G}$. Then, the following are equivalent:
\begin{itemize}
  \item[(i)] For every autocorrelation $\gamma^{}_{\mu}$ of $\mu$ along some van Hove sequence we have
  \[
  \reallywidehat{\gamma^{}_{\mu}}(\{ \chi \}) \, =\, 0 \,.
  \]
  \item[(ii)] If the Fourier--Bohr coefficient $a_{\chi}^\cB(\mu)$ of $\mu$ exists for some $\chi$ along some van Hove sequence $\cB$ then $a_{\chi}^\cB(\mu)=0$.
  \item[(iii)] The Fourier--Bohr coefficient $a_\chi(\mu)$ exists uniformly  for all $\chi \in \widehat{G}$ and $a_{\chi}(\mu)=0$. \qed
\end{itemize}
\end{proposition}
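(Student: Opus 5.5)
We prove the cycle (i)$\Rightarrow$(ii)$\Rightarrow$(iii)$\Rightarrow$(i) for the fixed character $\chi$. Statement (iii) is read at this $\chi$; since $\chi$ in (i) and (ii) is arbitrary, this is the same as the "for all $\chi$" form.

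\emph{(iii)$\Rightarrow$(i).} Let $\gamma_\mu$ be an autocorrelation of $\mu$ along some van Hove sequence $\cB$. By (iii) the coefficient $a_\chi(\mu)$ exists uniformly and equals $0$. Hof's result \cite[Theorem 4.1]{NS}, recalled above, then gives $\reallywidehat{\gamma_\mu}(\{\chi\})=|a_\chi(\mu)|^2=0$.

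\emph{(i)$\Rightarrow$(ii).} Suppose $a_\chi^\cB(\mu)$ exists along some van Hove sequence $\cB$. By the lemma on existence of autocorrelations, there is a subsequence $\cB'$ of $\cB$ along which an autocorrelation $\gamma_\mu$ exists. A subsequence of a van Hove sequence is again van Hove, and the limit defining $a_\chi^{\cB'}(\mu)$ is a subsequence of the convergent sequence defining $a_\chi^{\cB}(\mu)$, so $a_\chi^{\cB'}(\mu)=a_\chi^\cB(\mu)$. By (i) we have $\reallywidehat{\gamma_\mu}(\{\chi\})=0$. Proposition~\ref{PropBT} (Bombieri--Taylor) then yields $a_\chi^{\cB'}(\mu)=0$, and hence $a_\chi^\cB(\mu)=0$.

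\emph{(ii)$\Rightarrow$(iii).} By Proposition~\ref{cor2}, it suffices to show that $a_\chi^\cA(\mu)$ exists for every van Hove sequence $\cA=(A_n)_n$; its uniform existence and its value are then independent of $\cA$. Put
\[
c_n\,:=\,\frac{1}{|A_n|}\int_{A_n}\overline{\chi(t)}\,\dd\mu(t)\,.
\]
The plan is a compactness argument in three steps.
\begin{itemize}
\item[(1)] Show that $(c_n)_n$ is bounded. We have $|c_n|\le |\mu|(A_n)/|A_n|$, so it is enough to bound the right-hand side.
\item[(2)] Every subsequence of $(c_n)_n$ then has a further convergent subsequence $(c_{n_k})_k$.
\item[(3)] Since $(A_{n_k})_k$ is a van Hove sequence along which the Fourier--Bohr coefficient exists, (ii) forces $\lim_k c_{n_k}=0$. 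Thus every subsequence of $(c_n)_n$ has a sub-subsequence tending to $0$, so $c_n\to 0$.
\end{itemize}
This gives existence of $a_\chi^\cA(\mu)$, with value $0$, for every $\cA$, and hence uniform existence with $a_\chi(\mu)=0$.

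The main technical point is step (1), the uniform estimate $|\mu|(A_n)\le C|A_n|$ for large $n$. We would obtain it as follows.
\begin{itemize}
\item Fix a compact neighbourhood $K$ of $0$ and use translation boundedness: $|\mu|(t+K)\le\|\mu\|_K$ for all $t$.
\item Write $|\mu|(A_n)$ via Fubini as an average of $|\mu|(t+K)$ over $t\in A_n-K$, and estimate it by $\|\mu\|_K\,|A_n-K|/|K|$.
\item Observe that $|A_n-K|\le |A_n|+|\partial^{K'}A_n|$ for a suitable compact $K'$. The van Hove property makes $|\partial^{K'}A_n|/|A_n|$ tend to $0$, so $|A_n-K|/|A_n|$ stays bounded.
\end{itemize}
This is a standard estimate, as in \cite{LSS3,LS}.
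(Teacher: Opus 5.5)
Your proof is correct. The paper gives no argument of its own here; it only quotes \cite[Corollary~7.8]{LS}. Your reconstruction derives the statement from the results the paper recalls:
\begin{itemize}
\item Hof's theorem \cite[Theorem~4.1]{NS} for (iii)$\Rightarrow$(i);
\item Proposition~\ref{PropBT}, applied along a subsequence that carries an autocorrelation, for (i)$\Rightarrow$(ii);
\item Proposition~\ref{cor2} together with a sub-subsequence argument for (ii)$\Rightarrow$(iii).
\end{itemize}

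The only ingredient not stated in the paper is the density bound $\limsup_n |\mu|(A_n)/|A_n|<\infty$, and your derivation of it is sound. Precisely, Tonelli gives $|K|\,|\mu|(A_n)=\int_{A_n-K}|\mu|\bigl(A_n\cap(t+K)\bigr)\,\dd t\le \|\mu\|_K\,|A_n-K|$. The integrand is $|\mu|(A_n\cap(t+K))$ rather than $|\mu|(t+K)$, but it is bounded by the latter. Together with $A_n-K\subseteq A_n\cup\partial^{-K}A_n$ this yields the bound. Because only one character is involved, Bolzano--Weierstrass suffices, and you do not need the simultaneous-existence result \cite[Corollary~7.4]{LS}.

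Your fixed-$\chi$ reading of (iii) is the right one. It is the form used in the proof of Theorem~\ref{T1}. With $\chi$ fixed in (i) but arbitrary in (iii), the equivalence would fail: for $\mu=\delta_{\ZZ}$ on $\RR$ and $\chi=\tfrac12$, (i) holds, yet $a_0(\mu)=1\neq 0$.
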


Since we will often run into this situation, let us introduce the following definition.

\begin{definition}Let $\mu \in \cM^{\infty}_{}$ and $\chi \in \widehat{G}$. We say that the Fourier--Bohr coefficient $a_\chi(\mu)$ is \emph{uniformly vanishing} if $a_\chi(\mu)$ exists uniformly and is zero.
\end{definition}

\subsection{Diffraction of dynamical systems of translation bounded measures}

For a fixed $C>0$ and precompact open set $V$, denote by
\[
\cM^{}_{C,V}(G) \,:=\,  \big\{ \mu \in \cM^{\infty}_{}(G) \, : \, \| \mu \|^{}_{V} \leq C \big\} \,.
\]
Then, $\cM^{}_{C,V}(G)$ is compact in the vague topology for measures \cite[Theorem~2]{BL}, and the vague topology is metrisable on this space \cite[Theorem~2]{BL}.

\smallskip
Now, given a measure $\mu \in \cM^\infty(G)$, we can find some $C,V$ so that
\[
O^{}_{\mu} \, := \, \big\{ \tau^{}_t \mu \, : \, t \in G \big \} \subseteq \cM^{}_{C,V}(G) \,.
\]
It follows that the vague completion
\[
\XX(\mu) \, :=\,  \overline{ \big\{ \tau^{}_t \mu : t \in G \big\}  }
\]
becomes a metrisable compact space. Since $G$ acts on $\XX(\mu)$ via translates, the pair $(\XX(\mu), G)$ becomes a topological dynamical system.

Next, each $\varphi \in \Cc(G)$ induces a continuous function $F^{}_\varphi \in \Cc(\XX(\mu))$ via
\[
F^{}_\varphi(\omega) \,:=\, \varphi*\omega(0) \, = \, \int^{}_{G} \varphi(-t) \, \dd \omega(t) \,.
\]
$F^{}_\varphi$ commutes with the translation, meaning \cite{BL}
\[
\tau^{}_t F^{}_\varphi\,=\, F^{}_{\tau^{}_t\varphi} \qquad \forall \varphi \in \Cc(G), t \in G \,.
\]

\smallskip
Let us recall the following result from \cite{BL}, see \cite{LSS} for the second claim.

\begin{theorem}\cite[Proposition~6, Lemma~7]{BL}\label{thm:BL} Let $\mm$ be any $G$-invariant probability measure on $\XX(\mu)$. Then, there exists a positive definite measure $\gamma^{}_\mm$ on $G$ such that, for all $\varphi \in \Cc(G)$ and $t \in G$ we have
\[
\gamma^{}_\mm*\varphi*\widetilde{\varphi}(t) \,=\, \langle F^{}_{\varphi}, \tau^{}_t F^{}_{\varphi} \rangle \,.
\]
Moreover, if $\omega \in \XX(\mu)$ is generic for $\mm$ along some van Hove sequence $\cA$, then the autocorrelation $\gamma^{}_{\omega}$ of $\omega$ exists along $\cA$ and
\[
\gamma^{}_\omega \,=\, \gamma^{}_\mm \,.\tag*{$ \qed $}
\]
\end{theorem}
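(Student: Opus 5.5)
The plan is to construct $\gamma^{}_\mm$ directly from $\mm$ by a disintegration argument. For the second claim, I will compare ergodic averages along $\cA$ with the finite-volume autocorrelations $\gamma_n:=\frac{1}{|A_n|}(\omega|_{A_n})*\widetilde{(\omega|_{A_n})}$.

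\emph{Construction of $\gamma^{}_\mm$.} Since $\XX(\mu)\subseteq \cM_{C,V}(G)$, the map $\omega\mapsto \omega\otimes\overline{\omega}$ is vaguely continuous and uniformly bounded on compact subsets of $G\times G$. Hence
\[
\Theta(\Phi)\,:=\,\int_{\XX(\mu)} (\omega\otimes\overline{\omega})(\Phi)\,\dd\mm(\omega), \qquad \Phi\in \Cc(G\times G),
\]
defines a Radon measure $\Theta$ on $G\times G$. The $G$-invariance of $\mm$ makes $\Theta$ invariant under diagonal translations $(x,y)\mapsto(x+s,y+s)$. After the change of variables $(x,y)\mapsto(x,x-y)$, it becomes invariant under translations in the first coordinate. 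By uniqueness of Haar measure (applied to a disintegration, or by testing against product functions $f\otimes g$ and using that for fixed $g$ one obtains an invariant measure in $f$), it is of the form $\theta_G\otimes\gamma^{}_\mm$ for a unique measure $\gamma^{}_\mm$.

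Unwinding the definitions gives
\[
\langle F_\varphi,\tau_tF_\varphi\rangle=\int\!\!\int \varphi(-x)\overline{\varphi(t-y)}\,\dd\omega(x)\dd\overline{\omega}(y)\dd\mm=\gamma^{}_\mm*\varphi*\widetilde\varphi(t),
\]
after a routine computation (the sign and translation conventions need checking, possibly with $t\to -t$). Positive definiteness follows since $\gamma^{}_\mm*\varphi*\widetilde\varphi(0)=\|F_\varphi\|_2^2\geq 0$.

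\emph{Generic points.} Let $\omega$ be generic for $\mm$ along $\cA$, i.e.\ $\frac1{|A_n|}\int_{A_n}f(\tau_s\omega)\,\dd s\to\int f\,\dd\mm$ for all $f\in C(\XX(\mu))$. Apply this to $f=F_\varphi\cdot\overline{\tau_tF_\varphi}$, which is continuous. The left-hand side is
\[
\frac{1}{|A_n|}\int_{A_n}(\varphi*\omega)(s)\overline{(\varphi*\omega)(s-t)}\,\dd s .
\]
I then show this equals $\gamma_n*\varphi*\widetilde\varphi(t)+o(1)$. The two expressions differ only by contributions of $\omega$ on $\partial^{K}A_n$, with $K$ a compact set depending on $\operatorname{supp}\varphi$ and $t$. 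These contributions are bounded by $C'\,|\partial^KA_n|/|A_n|\to0$ using translation boundedness. Hence $\gamma_n*\varphi*\widetilde\varphi(t)\to\gamma^{}_\mm*\varphi*\widetilde\varphi(t)$ for all $\varphi,t$.

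The family $(\gamma_n)$ is uniformly translation bounded, hence vaguely precompact. Any cluster point $\gamma$ satisfies $\gamma*\varphi*\widetilde\varphi=\gamma^{}_\mm*\varphi*\widetilde\varphi$. By polarization, $\gamma*\varphi*\widetilde\psi=\gamma^{}_\mm*\varphi*\widetilde\psi$ for all $\varphi,\psi\in\Cc(G)$. Taking $\psi$ along an approximate identity then gives $\gamma=\gamma^{}_\mm$. So $\gamma_n\to\gamma^{}_\mm$ vaguely, i.e.\ $\gamma_\omega$ exists along $\cA$ and equals $\gamma^{}_\mm$.

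\emph{Main obstacle.} The hardest step is the rigorous van Hove boundary estimate, i.e.\ identifying the ergodic average with $\gamma_n*\varphi*\widetilde\varphi(t)$ up to $o(1)$ uniformly in the relevant compact data. I would handle it by writing both as integrals of $\omega\otimes\overline\omega$ against kernels supported near $A_n\times A_n$. The difference of these kernels is supported in $(\partial^KA_n\times G)\cup(G\times\partial^KA_n)$ intersected with a strip $|x-y|\in K$, and its total $\omega\otimes\overline\omega$-mass is $O(|\partial^KA_n|)$. A secondary technical point is the Haar-uniqueness step for $\Theta$, which I would verify via product test functions.
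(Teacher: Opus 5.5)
The paper gives no proof of this statement: it takes the construction of $\gamma^{}_\mm$ from \cite{BL} and the generic-point claim from \cite{LSS}. Your argument is correct and follows the standard route, up to one sign convention in the second part, explained below.

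\emph{First part.} Your Haar-disintegration step is the abstract form of the explicit formula in which $\gamma^{}_\mm$ is often written. Testing $\theta_G\otimes\gamma^{}_\mm$ against $\sigma\otimes g$ with $\int_G\sigma(t)\,\dd t=1$ gives $\gamma^{}_\mm=\int_{\XX(\mu)}(\sigma\omega)*\widetilde{\omega}\,\dd\mm(\omega)$ as a vague integral. Independence of $\sigma$ is exactly your invariance argument.

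No sign change is needed in this part. Take $\langle F,H\rangle=\int F\,\overline{H}\,\dd\mm$ and $\tau_tF_\varphi=F_{\tau_t\varphi}$, so that $F_{\tau_t\varphi}(\omega)=\int\varphi(-y-t)\,\dd\omega(y)$. Pushing $\Theta$ forward under $(x,y)\mapsto(x,x-y)$ then gives $\langle F_\varphi,\tau_tF_\varphi\rangle=\gamma^{}_\mm*\varphi*\widetilde{\varphi}(t)$ exactly. Moreover, $x-y$ is the variable that appears in $\omega|_{A_n}*\widetilde{\omega|_{A_n}}$, so no reflection of $\gamma^{}_\mm$ is needed.

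\emph{Boundary estimate.} The step you call the main obstacle is easier than you expect. Both $\gamma_n*\varphi*\widetilde{\varphi}(t)$ and the ergodic average are Haar integrals in $s$ of functions bounded uniformly in $n$, since $\omega*\varphi$ and $\omega|_{A_n}*\varphi$ are. The two integrands agree off $\partial^{L}A_n$ for a compact $L$ depending only on $\mathrm{supp}\,\varphi$ and $t$. So you never need to estimate $|\omega|$ on the boundary.

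\emph{The convention that matters.} It is in the definition of genericity. With the paper's translation, $F_\varphi(\tau_s\omega)=(\varphi*\omega)(-s)$. If genericity means $\frac{1}{|A_n|}\int_{A_n}f(\tau_s\omega)\,\dd s\to\int f\,\dd\mm$, as you wrote, the ergodic averages sample $\omega$ on $-A_n$. Your argument then yields the autocorrelation along $(-A_n)_n$, not along $\cA$.

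These can differ. On $\RR$ with $A_n=[0,n]$, take the measure equal to $\delta_{\ZZ}$ on $(-\infty,0]$ and to $\delta_{2\ZZ}$ on $(0,\infty)$. In that sense it is generic for the invariant measure on the orbit of $\delta_{\ZZ}$, whose autocorrelation is $\delta_{\ZZ}$. Yet its autocorrelation along $\cA$ is $\frac12\delta_{2\ZZ}$.

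So genericity along $\cA$ must be defined with respect to $s\mapsto\tau_{-s}\omega$, or equivalently along $-\cA$ for $s\mapsto\tau_s\omega$. With that choice your displayed average $\frac{1}{|A_n|}\int_{A_n}(\varphi*\omega)(s)\overline{(\varphi*\omega)(s-t)}\,\dd s$ is correct, and the rest of your argument goes through unchanged.
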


This leads to the following definition:

\begin{definition} Let $\mu \in \cM^\infty_{}(G)$ and let $\mm$ be a $G$-invariant probability measure on $\XX(\mu)$. The measure $\gamma^{}_{\mm}$ is called the
 \emph{autocorrelation measure of the dynamical system $(\XX(\mu), \mm)$}.

Its positive Fourier transform $\reallywidehat{ \gamma^{}_\mm}$ is called the \emph{diffraction spectrum of the dynamical system $(\XX(\mu), \mm)$}.

Similarly with the diffraction measure, the measure
\[
\Gamma_{\mm}^{} \, := \,  \reallywidehat{ \gamma^{}_\mm} - \left(\reallywidehat{ \gamma^{}_\mm}(\{0\})\right) \delta_0
\]
is called the  \emph{Bartlett diffraction spectrum of the dynamical system $(\XX(\mu), \mm)$}
\end{definition}

\medskip
Let us now briefly recall the connection between the autocorrelation/diffraction of a measure and of the corresponding dynamical system. First recall the following result:

\begin{lemma} \cite[Proposition~4.5(b)]{LS3} \cite{LSS} Let $\mu \in \cM^\infty(G)$ and let $\cA$ be a van Hove sequence. Then, there exists some subsequence $\cB$ of $\cA$ and a $G$-invariant probability measure $\mm$ on $\XX(\mu)$
such that $\mu$ is generic for $\mm$ with respect to $\cB$.

In particular, the autocorrelation $\gamma_{\mu}$ of $\mu$ exists along $\cB$ and 
\[
\gamma^{}_{\mu} \, = \, \gamma^{}_{\mm} \,. \tag*{$ \qed $}
\] 
\end{lemma}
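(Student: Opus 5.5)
The statement to prove is the last one displayed: given $\mu\in\cM^\infty(G)$ and a van Hove sequence $\cA$, there is a subsequence $\cB$ of $\cA$ and a $G$-invariant probability measure $\mm$ on $\XX(\mu)$ such that $\mu$ is generic for $\mm$ along $\cB$, and then $\gamma_\mu=\gamma_\mm$ along $\cB$. I would argue via weak-$*$ compactness of probability measures on the compact metrisable space $\XX(\mu)$.

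For each $n$ define the empirical probability measures
\[
\mm_n \,:=\, \frac{1}{|A_n|}\int_{A_n} \delta_{\tau_t\mu}\, \dd t \,,
\]
that is, $\mm_n(F)=\frac{1}{|A_n|}\int_{A_n}F(\tau_t\mu)\,\dd t$ for $F\in C(\XX(\mu))$. The integral makes sense because $t\mapsto \tau_t\mu$ is continuous into $\XX(\mu)$. Since $\XX(\mu)$ is compact metrisable, the space $C(\XX(\mu))$ is separable. Hence the set of probability measures on $\XX(\mu)$ is weak-$*$ compact and metrisable, and some subsequence $\cB=(A_{n_k})_k$ gives $\mm_{n_k}\to\mm$ weak-$*$.

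Next I would check that $\mm$ is $G$-invariant. Fix $s\in G$ and $F\in C(\XX(\mu))$. Then
\[
|\mm_n(F\circ\tau_s)-\mm_n(F)|\,\le\, \|F\|_\infty\,\frac{|(s+A_n)\triangle A_n|}{|A_n|}\,.
\]
The right-hand side tends to $0$ by the van Hove property, since $(s+A_n)\triangle A_n\subseteq \partial^{K}A_n$ with $K=\{s,-s\}$. Passing to the limit along $\cB$ gives $\mm(F\circ\tau_s)=\mm(F)$.

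Genericity of $\mu$ for $\mm$ along $\cB$ means that $\frac{1}{|B_k|}\int_{B_k}F(\tau_t\mu)\,\dd t\to\mm(F)$ for all continuous $F$. This is exactly the convergence $\mm_{n_k}\to\mm$, so it holds by construction. The final claim, that $\gamma_\mu$ exists along $\cB$ and equals $\gamma_\mm$, then follows directly from the second part of Theorem~\ref{thm:BL} applied with $\omega=\mu\in\XX(\mu)$.

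The main obstacle is justifying that genericity, in the precise sense used in Theorem~\ref{thm:BL}, agrees with the empirical-measure convergence above, including any measurability issues. If Theorem~\ref{thm:BL} instead requires convergence only for functions of the form $F_\varphi\cdot\overline{F_\psi}\circ\tau_s$, this is still covered, because such functions are continuous on $\XX(\mu)$.
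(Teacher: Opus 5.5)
Your proposal is correct. It runs through the empirical measures $\frac{1}{|A_n|}\int_{A_n}\delta_{\tau_t\mu}\,\dd t$, weak-$*$ sequential compactness on the compact metrisable hull, and invariance of the limit via $(s+A_n)\triangle A_n\subseteq\partial^{\{s,-s\}}A_n$, and then applies Theorem~\ref{thm:BL} with $\omega=\mu$. This is the standard Krylov--Bogolyubov argument that presumably underlies the cited result \cite[Proposition~4.5(b)]{LS3}; the paper gives no proof of its own and simply quotes that reference, so there is nothing further to compare.
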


It follows that the set of autocorrelations $\gamma$ of $\mu$ along all van Hove sequences where they exists is the same as the set of all autocorrelations $\gamma^{}_{\mm}$ of $G$-invariant probability measures $\mm$ on $\XX(\mu)$ for which $\mu$ is generic.

\begin{corollary}[Autocorrelation of elements vs autocorrelation of dynamical systems] \label{Cor1} Let $\mu, \gamma \in \cM^\infty(G)$. Then $\gamma$ is the autocorrelation of $\mu$ (along some van Hove sequence $\cA$) if and only if there exists some $G$-invariant probability measure $\mm$ on $\XX(\mu)$ such that $\mu$ is generic for $\mm$ (along some van Hove sequence $\cB$) and
\[
\gamma\, = \, \gamma^{}_\mm \,.
\]
Moreover, in this case, $\cB$ can be chosen to be a subsequence of $\cA$. \qed
\end{corollary}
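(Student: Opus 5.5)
We must prove Corollary~\ref{Cor1}: $\gamma$ is an autocorrelation of $\mu$ along some van Hove sequence $\cA$ if and only if $\gamma=\gamma^{}_\mm$ for some $G$-invariant probability measure $\mm$ on $\XX(\mu)$ for which $\mu$ is generic along some van Hove sequence $\cB$, where $\cB$ may be taken to be a subsequence of $\cA$. Both directions come from combining Theorem~\ref{thm:BL} with the preceding lemma, together with the fact that vague limits pass to subsequences.

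For the ``if'' direction, assume $\mu$ is generic for $\mm$ along $\cB$. Take $\omega=\mu\in\XX(\mu)$ in the second part of Theorem~\ref{thm:BL}. It gives that the autocorrelation $\gamma_\mu$ exists along $\cB$ and equals $\gamma^{}_\mm=\gamma$. So $\gamma$ is an autocorrelation of $\mu$ along the van Hove sequence $\cB$.

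For the ``only if'' direction, assume
\[
\gamma \,=\, \lim_{n\to\infty} \frac{1}{|A_n|}\bigl(\mu|_{A_n}\bigr)*\widetilde{\bigl(\mu|_{A_n}\bigr)}
\]
vaguely along $\cA$. By the preceding lemma there is a subsequence $\cB$ of $\cA$ and a $G$-invariant probability measure $\mm$ on $\XX(\mu)$ such that $\mu$ is generic for $\mm$ along $\cB$, and moreover the autocorrelation of $\mu$ along $\cB$ equals $\gamma^{}_\mm$. The step that needs care is identifying this autocorrelation with $\gamma$. It holds because the defining sequence of measures along $\cB$ is a subsequence of the one along $\cA$, and a subsequence of a vaguely convergent sequence has the same limit. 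Hence $\gamma^{}_\mm=\gamma$, with $\cB$ a subsequence of $\cA$, which also gives the ``moreover'' clause.

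There is no real obstacle: the only point to watch is that the lemma's autocorrelation along $\cB$ is exactly the restriction of the $\cA$-limit, and vague limits are unique.
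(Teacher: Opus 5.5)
Your proof is correct and follows the route the paper intends; the paper gives no written proof and only says the result is immediate from the preceding lemma and Theorem~\ref{thm:BL}. For ``if'' you apply the second part of Theorem~\ref{thm:BL} to $\omega=\mu$; for ``only if'' and the subsequence clause you use the preceding lemma together with the fact that the vague limit along $\cA$ is inherited by any subsequence $\cB$.
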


\begin{remark} Let us note here in passing that it is possible for two distinct $G$-invariant  measures $\mm \neq \mathsf{n}$ on $\XX(\mu)$ to satisfy
\[
\gamma^{}_{\mm} \, = \, \gamma^{}_{\mathsf{n}} \,.
\]
Indeed, The Rudin--Shapiro chain $\omega^{}_{\mathsf{RS}}$ (see \cite[Section~4.7.1]{TAO}) has autocorrelation $\gamma^{}_{\mathsf{RS}}$ and diffraction $\reallywidehat{\gamma^{}_{\mathsf{RS}}}$ given by \cite[Theorem~10.2]{TAO}
\begin{align*}
 \gamma^{}_{\mathsf{RS}}\,  &= \, \delta_0 \\
  \reallywidehat{\gamma^{}_{\mathsf{RS}}} \, & = \, \lambda \,.
\end{align*}
Moreover, as a substitution tiling, the hull $\XX(\omega^{}_{\mathsf{RS}})$ is uniquely ergodic \cite[Theorem~4.3]{TAO}. Let $\mm$ be the unique ergodic measure on this hull.

Next, consider the Bernoulli comb $W$ on the line from \cite[Sect.~11.2.1]{TAO} with probability $p=\frac{1}{2}$. Then, there exists an ergodic measure $\mathsf{n}$ on $\XX = \{ \pm 1 \}^{\ZZ}$ such that for $\mathsf{n}$ almost surely all $W \in \XX$ the diffraction of $W$ are given by \cite[Proposition~11.1.]{TAO}
\begin{align*}
 \gamma^{}_{W}\,  &= \, \delta_0 \\
  \reallywidehat{\gamma^{}_{W}} \, & = \, \lambda \,.
\end{align*}
Now, construct a translation bounded measure $\mu$ by gluing together alternately larger and larger patches around the origin of $\omega^{}_{\mathsf{RS}}\delta^{}_{\ZZ}$ and $W \delta^{}_{\ZZ}$, respectively. Then, both $\mm$ and $\mathsf{n}$ are $G$-invariant ergodic measures on $\XX(\mu)$ with the same autocorrelation measure. Moreover, there exists van Hove sequences $\cA$ and $\cB$ such that $\mu$ is generic for $\mm$ along $\cA$ and for $\mathsf{n}$ along $\cB$. \exend
\end{remark}

The following result is well known and follows immediately from Corollary~\ref{Cor1}.

\begin{corollary}\label{cor22} Let $\mu \in \cM^\infty_{}(G)$ be so that $\XX(\mu)$ is uniquely ergodic with uniquely ergodic measure $\mm$. Then, for all $\omega \in \XX(\mu)$ and all van Hove sequences $\cA$,  the autocorrelation $\gamma_\omega$ of $\mu$ exists along $\cA$ and
\[
\gamma_{\omega} \, = \, \gamma_{\mm} \,.\tag*{$ \qed $}
\]
\end{corollary}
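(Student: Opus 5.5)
Since $\XX(\mu)$ is uniquely ergodic with measure $\mm$, I would fix $\omega \in \XX(\mu)$ and a van Hove sequence $\cA$, and derive the statement from Corollary~\ref{Cor1} by a subsequence argument. The hull $\XX(\omega)$ is a closed $G$-invariant subset of $\XX(\mu)$, so $\XX(\omega)\subseteq \XX(\mu)$. Hence every $G$-invariant probability measure on $\XX(\omega)$ is also a $G$-invariant probability measure on $\XX(\mu)$, viewed as a measure supported on the subset. By unique ergodicity it must equal $\mm$.

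Next, let $\cB$ be any subsequence of $\cA$. The Lemma recalled before Corollary~\ref{Cor1}, applied to $\omega$, gives a further subsequence $\cC$ of $\cB$ and a $G$-invariant probability measure $\mathsf{n}$ on $\XX(\omega)$ such that $\omega$ is generic for $\mathsf{n}$ along $\cC$. In that case the autocorrelation of $\omega$ exists along $\cC$ and equals $\gamma^{}_{\mathsf{n}}$. By the previous paragraph $\mathsf{n}=\mm$, so
\[
\lim_{k\to\infty} \frac{1}{|C_k|}\, \omega|_{C_k} * \widetilde{\omega|_{C_k}} \,=\, \gamma^{}_{\mm}
\]
in the vague topology.

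Finally, I would upgrade this to convergence along the full sequence $\cA$. The measures $\gamma_n:=\frac{1}{|A_n|}\, \omega|_{A_n} * \widetilde{\omega|_{A_n}}$ lie in a fixed set $\cM_{C,V}(G)$, because $\omega$ is translation bounded and $\cA$ is van Hove. This set is vaguely compact and metrisable by \cite[Theorem~2]{BL}. In a compact metric space, a sequence every subsequence of which has a further subsequence converging to the same limit $\gamma^{}_{\mm}$ converges to $\gamma^{}_{\mm}$. Hence $\gamma_\omega$ exists along $\cA$ and $\gamma_\omega=\gamma^{}_{\mm}$.

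The only point needing care is the uniform translation-boundedness bound for the $\gamma_n$. For this I would estimate $|\gamma_n|(t+V)\le \frac{1}{|A_n|}|\omega|(A_n)\,\|\omega\|_{V-V}$ and bound $|\omega|(A_n)/|A_n|$ uniformly in $n$ using the van Hove property. This is standard, as in \cite[Theorem 4.14]{LSS3}. Alternatively, I would avoid the bound altogether by testing against each fixed $\varphi*\widetilde\varphi$ with $\varphi\in\Cc(G)$ and applying the subsequence argument to the scalar sequences $\gamma_n*\varphi*\widetilde\varphi(t)$.
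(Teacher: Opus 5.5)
Your proof is correct and is precisely the argument the paper compresses into ``follows immediately from Corollary~\ref{Cor1}''. Any autocorrelation of $\omega$ along a subsequence of $\cA$ is $\gamma_{\mathsf n}$ for an invariant $\mathsf n$ on $\XX(\omega)\subseteq\XX(\mu)$, hence equals $\gamma_{\mm}$ by unique ergodicity, and the subsequence principle upgrades this to convergence along all of $\cA$. The uniform translation-boundedness estimate is not needed, since the subsequence principle holds for sequences in any topological space: just apply it to the scalars $\gamma_n(\psi)$ for every $\psi\in\Cc(G)$ (testing only against $\varphi*\widetilde\varphi$, as in your alternative, would bring the bound back in).
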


\begin{remark} Let $\mu \in \cM^\infty(G)$. It is easy to see that the following are equivalent:
\begin{itemize}
  \item[(i)] The autocorrelation of $\mu$ exists along all van Hove sequences.
  \item[(ii)] The autocorrelation of $\mu$ exists and is the same along all van Hove sequences.
  \item[(iii)] If the autocorrelations $\gamma, \eta$ of $\mu$ exists along two van Hove sequences, then $\gamma=\eta$.
    \item[(iv)] For each $\omega \in \XX(\mu)$ and each van Hove sequence $\cA$ the autocorrelation $\gamma$ of $\omega$ exists along $\cA$.
\end{itemize}
In this situation we say that $\mu$ \emph{has unique autocorrelation}.

Corollary~\ref{cor22} says that unique ergodicity implies unique autocorrelation. The converse is not true. \exend
\end{remark}

\section{Continuous (Bartlett) diffraction spectra for ergodic dynamical systems of translation bounded measures}

In this section we review and expand on a result of Lenz which shows that the (CPP) holds almost surely for  ergodic dynamical systems of translation bounded measures. As an application, this provides the first general connection between continuous diffraction and vanishing of Fourier--Bohr coefficients.

\begin{theorem}\cite[Theorem~5(b)]{DL}\label{Thm-BTDS} Let $\mu \in \cM^\infty(G)$, let $\cA$ be a van Hove sequence along which the Birkhoff ergodic theorem holds, let $\chi \in \widehat{G}$ and let $\mm$ be an ergodic measure on $\XX(\mu)$. Then, for almost all $\omega \in \XX(\mu)$ the Fourier--Bohr coefficient $a_\chi^\cA(\omega)$ exists and satisfies
\[
\reallywidehat{\gamma^{}_{\mm}}(\{\chi \}) \, = \, \left| a_\chi^\cA(\omega) \right|^2 \,.\tag*{$ \qed $}
\]
\end{theorem}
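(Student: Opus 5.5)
The plan is to pass from the measure $\omega$ to the continuous function $F^{}_\varphi$ on $\XX(\mu)$, apply the ergodic theorem to a character-twisted average of $F^{}_\varphi$ along orbits, and identify both the pointwise limit and its modulus through the spectral theory of the Koopman representation of $G$ on $L^2(\XX(\mu),\mm)$.

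\emph{Choice of test function and reduction to the smoothed measure.} I fix one $\varphi \in \Cc(G)$ with $\widehat{\varphi}(\chi)\neq 0$. Since $\omega*\varphi(t) = F^{}_{\tau_{-t}\varphi}(\omega)$, which is $F^{}_\varphi$ evaluated at a translate of $\omega$ (up to the sign convention for $\tau$), the Fourier--Bohr average of $\omega*\varphi$ along $\cA$ is a Birkhoff-type average of $F^{}_\varphi$ twisted by $\overline{\chi}$:
\[
\frac{1}{|A_n|}\int_{A_n} \overline{\chi(t)}\, \omega*\varphi(t)\, \dd t \,.
\]
By Proposition~\ref{Lem-FB}(c), if this average converges, then $a_\chi^\cA(\omega)$ exists and equals the limit divided by $\widehat{\varphi}(\chi)$. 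So it suffices to show that the twisted average converges for $\mm$-almost every $\omega$, and to compute the square of the modulus of its limit.

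\emph{Pointwise convergence via a skew product.} To handle the twist I will apply the Birkhoff theorem, assumed to hold along $\cA$, to the skew-product action on $\XX(\mu)\times \mathbb{T}$ given by $t\cdot(\omega,z) = (\tau_t\omega, \chi(t)z)$. This action preserves $\mm\times$Haar. I apply the theorem to the function $(\omega,z)\mapsto F^{}_\varphi(\omega)\bar z$. The averages converge almost everywhere to the conditional expectation onto the invariant functions. Invariant functions of the form $h(\omega)\bar z$ are exactly those with $h$ a $\chi$-eigenfunction, so the limit at $(\omega,z)$ is $\bar z\,(P_\chi F^{}_\varphi)(\omega)$, where $P_\chi$ is the orthogonal projection onto the $\chi$-eigenspace. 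By Fubini the convergence holds for some $z$ and almost every $\omega$, and $z$ only scales the average. Hence for $\mm$-a.e. $\omega$ the twisted average converges to $P_\chi F^{}_\varphi(\omega)$, up to conjugation and sign conventions.

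\emph{Identifying the modulus of the limit.} Ergodicity of $\mm$ gives that the $\chi$-eigenspace is either $\{0\}$ or spanned by an eigenfunction $f_\chi$ with $|f_\chi|=1$ almost everywhere. In the second case
\[
P_\chi F^{}_\varphi(\omega) = \langle F^{}_\varphi, f_\chi\rangle f_\chi(\omega) \,,
\]
so for almost every $\omega$
\[
|P_\chi F^{}_\varphi(\omega)|^2 = \|P_\chi F^{}_\varphi\|^2 = \sigma^{}_{F_\varphi}(\{\chi\}) \,,
\]
where $\sigma^{}_{F_\varphi}$ is the spectral measure of $F^{}_\varphi$. In the first case both sides vanish. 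By Theorem~\ref{thm:BL}, $\langle F^{}_\varphi,\tau_tF^{}_\varphi\rangle = \gamma^{}_\mm*\varphi*\widetilde\varphi(t)$, and taking Fourier transforms gives $\sigma^{}_{F_\varphi} = |\widehat\varphi|^2\,\reallywidehat{\gamma^{}_\mm}$. Dividing by $|\widehat\varphi(\chi)|^2\neq0$ then yields $|a^\cA_\chi(\omega)|^2 = \reallywidehat{\gamma^{}_\mm}(\{\chi\})$ for almost every $\omega$.

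\emph{Main obstacle.} I expect the hardest step to be the twisted ergodic theorem: making precise that Birkhoff along $\cA$ applies to the skew product, which requires reading the hypothesis as holding for such compact extensions, and identifying the conditional expectation with $P_\chi$. If the hypothesis does not cover the skew product, I would fall back on the mean ergodic theorem, which yields convergence in $L^2$, and then pass to a.e.\ convergence using the one-dimensionality of the eigenspace. Routine points such as the sign conventions in $\tau$ and in $\omega*\varphi$ versus $F^{}_\varphi$ also need to be tracked, but they should not cause difficulty.
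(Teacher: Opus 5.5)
The paper gives no proof of this statement; it is quoted from \cite{DL}. Your proposal is a correct, self-contained proof along the standard Wiener--Wintner lines. Its ingredients are:
\begin{itemize}
\item almost everywhere convergence of the $\overline{\chi}$-twisted orbit averages of $F_\varphi$ to $P_\chi F_\varphi$;
\item constancy of $|P_\chi F_\varphi|$ by ergodicity;
\item the identity $\sigma_{F_\varphi}=|\widehat{\varphi}|^2\,\reallywidehat{\gamma_\mm}$, obtained from Theorem~\ref{thm:BL} by uniqueness of Fourier transforms;
\item Proposition~\ref{Lem-FB}(c), to pass from $\omega*\varphi$ back to $\omega$.
\end{itemize}
With $\omega*\varphi(t)=F_\varphi(\tau_{-t}\omega)$ and eigenfunctions normalised by $g\circ\tau_{-t}=\chi(t)g$, the atom lands at $\chi$ rather than $\overline{\chi}$, so the sign bookkeeping you postpone is harmless.

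Two details deserve a sentence each.
\begin{itemize}
\item The product action on $\XX(\mu)\times\mathbb{T}$ is in general \emph{not} ergodic. A $\chi$-eigenfunction $g$ yields the invariant function $g(\omega)\bar z$, and if $\chi(G)$ is finite the circle factor alone is not ergodic. So you need the pointwise ergodic theorem along $\cA$ for non-ergodic actions. This is the usual meaning of the hypothesis, since Lindenstrauss' theorem for tempered sequences holds for every measure-preserving action. If the hypothesis is only assumed for ergodic actions, ergodic decomposition gives the rest.
\item The step $Q(F_\varphi\otimes\bar z)=(P_\chi F_\varphi)\otimes\bar z$ needs the projection $Q$ onto invariant vectors to preserve $L^2(\XX(\mu),\mm)\otimes\bar z$. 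This holds because the action commutes with the rotations $(\omega,z)\mapsto(\omega,wz)$.
\end{itemize}

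Your fallback, however, does not work as stated. If $\chi$ is not an eigenvalue, then $P_\chi F_\varphi=0$ and the mean ergodic theorem gives only $L^2$-convergence to $0$, hence a.e.\ convergence only along a subsequence. One-dimensionality of the eigenspace does not help there. If you want to use the pointwise ergodic theorem on $(\XX(\mu),\mm)$ only, split cases as the paper does in the proof of Corollary~\ref{cor4}.
\begin{itemize}
\item If $\reallywidehat{\gamma_\mm}(\{\chi\})=0$, then $\mm$-almost every $\omega$ is generic along $\cA$. So $\gamma_\omega=\gamma_\mm$ by Theorem~\ref{thm:BL}, and Proposition~\ref{PropBT} shows that $a_\chi^\cA(\omega)$ exists and equals $0$.
\item If $\reallywidehat{\gamma_\mm}(\{\chi\})>0$, then $\|P_\chi F_\varphi\|^2=|\widehat{\varphi}(\chi)|^2\reallywidehat{\gamma_\mm}(\{\chi\})>0$. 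Hence there is an eigenfunction $g$ with $|g|=1$, and
\[
\overline{\chi(t)}\,F_\varphi(\tau_{-t}\omega)\,=\,g(\omega)\,\bigl(\overline{g}\,F_\varphi\bigr)(\tau_{-t}\omega)
\]
turns the twisted average into an ordinary ergodic average of $\overline{g}F_\varphi\in L^\infty(\mm)$. Its limit is $g(\omega)\langle F_\varphi,g\rangle=P_\chi F_\varphi(\omega)$ for $\mm$-almost every $\omega$.
\end{itemize}

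The trade-off between the two routes: your product-system argument treats all $\chi$ uniformly and avoids the Bombieri--Taylor result of \cite{LS}, which is far deeper than the ergodic-theoretic input. The case split needs the pointwise ergodic theorem only on $\XX(\mu)$ itself.
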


As an immediate consequences, we get:

\begin{corollary}\label{cor4} Let $\mu \in \cM^\infty(G)$, let $\cA$ be a van Hove sequence along which the Birkhoff ergodic theorem holds and let $\mm$ be an ergodic measure on $\XX(\mu)$. Then, there exists a set $X \subseteq \XX(\mu)$ with the following properties:
\begin{itemize}
  \item[(a)]  $\mm(X)=1$.
  \item[(b)] For all $\omega \in X$ the autocorrelation $\gamma^{}_{\omega}$ exists with respect to $\cA$ and
  \[
  \gamma_{\omega}^{} \, = \, \gamma_{\mm}^{} \,.
  \]
  \item[(c)] For all $\omega \in X$ and all $\chi \in \widehat{G}$ the Fourier--Bohr coefficient $a_{\chi}^\cA(\omega)$ exists and satisfies the (CPP)
  \begin{equation}\label{eq1}
  \reallywidehat{\gamma_{\mm}^{}}(\{\chi\}) \, = \, \left| a_\chi^\cA(\omega) \right|^2 \,.
  \end{equation}
\end{itemize}
\end{corollary}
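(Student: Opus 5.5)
The plan is to intersect full-measure sets: one for the autocorrelation (b), and one per character for the (CPP) (c). The difficulty is that $\widehat{G}$ is uncountable, so Theorem~\ref{Thm-BTDS} cannot simply be intersected over all $\chi$.

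\textbf{Step 1 (autocorrelation).} Since $G$ is second countable, $\Cc(G)$ contains a countable set $\{\varphi_k\}$ which is dense in the inductive limit sense. For each $k$ and each $t$ in a countable dense subset $D\subseteq G$, apply the Birkhoff ergodic theorem along $\cA$ to the function
\[
F_{\varphi_k}\cdot\overline{\tau_t F_{\varphi_k}} .
\]
This gives a set of full measure on which
\[
\omega*\varphi_k*\widetilde{\omega*\varphi_k}\ \text{averaged over } A_n
\]
converges to $\langle F_{\varphi_k},\tau_tF_{\varphi_k}\rangle=\gamma_\mm*\varphi_k*\widetilde{\varphi_k}(t)$, using Theorem~\ref{thm:BL}. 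The van Hove property lets us replace the restricted convolutions $\omega|_{A_n}*\widetilde{\omega|_{A_n}}$ by these averages up to $o(|A_n|)$. Translation boundedness then gives equicontinuity in $t$ and uniform control in $\varphi$. Hence on a full-measure set $X_1$ the autocorrelation exists along $\cA$ and equals $\gamma_\mm$. Equivalently, $X_1$ can be taken to be the set of $\mm$-generic points along $\cA$, which has full measure by Birkhoff applied to a countable dense subset of $C(\XX(\mu))$; then Theorem~\ref{thm:BL} gives (b) directly. I will take this second route.

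\textbf{Step 2 (atoms).} The positive measure $\reallywidehat{\gamma_\mm}$ is translation bounded, so it has at most countably many atoms $\{\chi_j\}_j$. For each $j$, Theorem~\ref{Thm-BTDS} gives a full-measure set $Y_j$ on which $a_{\chi_j}^\cA(\omega)$ exists and satisfies \eqref{eq1}.

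\textbf{Step 3 (remaining characters).} Set $X:=X_1\cap\bigcap_j Y_j$, which still has $\mm(X)=1$. For $\omega\in X$ and $\chi\notin\{\chi_j\}$, we have $\reallywidehat{\gamma_\omega}(\{\chi\})=\reallywidehat{\gamma_\mm}(\{\chi\})=0$, using that $\gamma_\omega=\gamma_\mm$ along $\cA$ by (b). Proposition~\ref{PropBT}, applied to $\omega$ along $\cA$, then shows that $a_\chi^\cA(\omega)$ exists and equals $0$, which is \eqref{eq1} at $\chi$. Together with Step 2 this gives (c) for all $\chi$.

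\textbf{Main obstacle.} The crux is Step 3, handling uncountably many $\chi$ via the Bombieri--Taylor result. The one technical point to check is that $\omega\in\XX(\mu)$ is itself translation bounded, so that Proposition~\ref{PropBT} applies. This holds because $\XX(\mu)\subseteq\cM_{C,V}(G)$.
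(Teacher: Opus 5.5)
Your proposal is correct and is essentially the paper's proof: the $\mm$-generic points give (b) via Theorem~\ref{thm:BL}, the countably many atoms of $\reallywidehat{\gamma_\mm}$ are handled by intersecting the full-measure sets from Theorem~\ref{Thm-BTDS}, and every remaining $\chi$ is covered on the generic set by Proposition~\ref{PropBT}. One small correction: the atoms are countable because $\widehat{G}$ is second countable, hence $\sigma$-compact, as the paper notes; translation boundedness alone would not give this.
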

\begin{proof} Let $Y$ be the set of elements in $\XX(\mu)$ which are generic for $\mm$. Since $\XX(\mu)$ is compact and metrisable, by the ergodic theorem we have
\[
\mm(Y)=1 \,.
\]
By Theorem~\ref{thm:BL}, for all $\omega \in Y$, the autocorrelation $\gamma_{\omega}$ of $\omega$ exists along $\cA$ and
\[
\gamma_{\omega} \, = \, \gamma_{\mm} \,.
\]
Next, the set
\[
\BB \, :=\,  \{ \chi \in \widehat{G} \,:\,  \reallywidehat{\gamma_{\mm}}(\{ \chi \}) \neq 0\}
\]
is locally countable. Since $G$ is second countable, so is $\widehat{G}$ and hence $\BB$ is countable.

Now, by Proposition~\ref{PropBT}, for all $\chi \notin \BB$ and all $\omega \in Y$, the Fourier--Bohr coefficient $a_\chi^\cA(\omega)$ exists and
\[
\reallywidehat{\gamma_{\mm}^{}}(\{\chi\})\,=\, 0 \,=\,  \left| a_\chi^\cA(\omega) \right|^2 \,.
\]

Next, for each $\chi \in \BB$,  by Theorem~\ref{Thm-BTDS} there exists a set $X_\chi \subseteq \XX(\mu)$ such that for all  $\omega \in Y_\chi$ the Fourier--Bohr coefficient $a_\chi^\cA(\omega)$ exists and satisfies
\eqref{eq1}. Since $\BB$ is countable, the set
\[
X \, := \, Y \cap \left(\bigcap_{\chi \in \BB}^{} Y_ \chi \right)
\]
has full measure, and the claim follows.
\end{proof}

\begin{remark} If $\XX(\mu)$ is uniquely ergodic, then by Corollary~\ref{cor22} the autocorrelation $\gamma^{}_{\omega}$ exists with respect to $\cA$ and satisfies
  \[
  \gamma_{\omega}^{} \, = \, \gamma_{\mm}^{} \,,
  \]
  for all $\omega \in \XX(\mu)$. On another hand, even in this situation, the (CPP) is guaranteed only to hold almost surely. An example of an uniquely ergodic dynamical system where the (CPP) does not hold for all $\omega \in \XX(\mu)$ is given in \cite[Appendix~A]{LSS}. \exend
\end{remark}

Corollary~\ref{cor4} immediately gives the following result.

\begin{corollary} Let $\mu \in \cM^\infty(G)$, let $\cA$ be a van Hove sequence along which the Birkhoff ergodic theorem holds and let $\mm$ be an ergodic measure on $\XX(\mu)$.
Then,
\begin{itemize}
  \item[(a)] The following are equivalent:
  \begin{itemize}
    \item[(i)] The diffraction spectrum $\reallywidehat{\gamma^{}_{\mm}}$ of $(\XX(\mu), \mm)$ is continuous.
    \item[(ii)] For $\mm$-almost all $\omega \in \XX(\mu)$ the autocorrelation $\gamma^{}_{\omega}$ exists with respect to $\cA$ and  $\reallywidehat{\gamma^{}_{\omega}}$  is continuous.
    \item[(iii)] There exists a set $X \subseteq \XX(\mu)$ of full measures such that, for all $\omega \in X$ and all $\chi \in \widehat{G}$ the Fourier--Bohr coefficient $a_{\chi}^\cA(\omega)$ exists and is zero.
  \end{itemize}
  \item[(b)] The following are equivalent:
  \begin{itemize}
    \item[(i)] The Bartlett diffraction spectrum $\Gamma^{}_{\mm}$ of $(\XX(\mu), \mm)$ is continuous.
    \item[(ii)] For $\mm$-almost all $\omega \in \XX(\mu)$ the autocorrelation $\gamma^{}_{\omega}$ exists with respect to $\cA$ and the Bartlett diffraction spectrum $\Gamma^{}_{\omega}$  is continuous.
    \item[(iii)] There exists a set $X \subseteq \XX(\mu)$ of full measures such that, for all $\omega \in X$ and all $0 \neq \chi \in \widehat{G}$ the Fourier--Bohr coefficient $a_{\chi}^\cA(\omega)$ exists and is zero.. \qed
  \end{itemize}
\end{itemize}
\end{corollary}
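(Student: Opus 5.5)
The statement is the last corollary: parts (a) and (b), each the equivalence of (i), (ii), (iii). I will deduce everything from Corollary~\ref{cor4}, so fix once and for all the full-measure set $X$ given there. For every $\omega \in X$ we know three things: the autocorrelation $\gamma^{}_\omega$ exists along $\cA$; it equals $\gamma^{}_\mm$; and every Fourier--Bohr coefficient $a_\chi^\cA(\omega)$ exists and satisfies \eqref{eq1}.

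For (a), I prove (i)$\Rightarrow$(ii)$\Rightarrow$(i) and (i)$\Leftrightarrow$(iii).

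First, (i)$\Rightarrow$(ii). For $\omega \in X$ we have $\reallywidehat{\gamma^{}_\omega} = \reallywidehat{\gamma^{}_\mm}$, and this is continuous by (i). Since $\mm(X)=1$, (ii) holds.

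Next, (ii)$\Rightarrow$(i). Let $X'$ be the full-measure set of $\omega$ on which (ii) holds. Then $X \cap X'$ has full measure, so it is nonempty. Pick $\omega$ in it. Then $\reallywidehat{\gamma^{}_\mm} = \reallywidehat{\gamma^{}_\omega}$, which is continuous, giving (i).

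Now, (i)$\Rightarrow$(iii). Take the same set $X$. For each $\chi$, \eqref{eq1} gives $|a_\chi^\cA(\omega)|^2 = \reallywidehat{\gamma^{}_\mm}(\{\chi\}) = 0$.

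Finally, (iii)$\Rightarrow$(i). Intersect the set in (iii) with $X$; the result still has full measure, hence is nonempty. Pick $\omega$ in it. For every $\chi$, \eqref{eq1} gives $\reallywidehat{\gamma^{}_\mm}(\{\chi\}) = |a_\chi^\cA(\omega)|^2 = 0$. A measure with no atoms is continuous, so (i) holds.

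For (b) I run the identical argument, restricted to $\chi \neq 0$. The one thing to check is that $\Gamma^{}_\omega = \Gamma^{}_\mm$ whenever $\gamma^{}_\omega = \gamma^{}_\mm$. This holds because $\Gamma$ is defined from $\reallywidehat{\gamma}$ alone. Moreover, $\Gamma^{}_\mm(\{\chi\}) = \reallywidehat{\gamma^{}_\mm}(\{\chi\})$ for $\chi \neq 0$ and $\Gamma^{}_\mm(\{0\}) = 0$. Hence continuity of $\Gamma^{}_\mm$ is equivalent to $\reallywidehat{\gamma^{}_\mm}(\{\chi\}) = 0$ for all $\chi \neq 0$, and this matches (iii) through \eqref{eq1}.

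There is no real obstacle; all the content sits in Corollary~\ref{cor4}. The only point needing care is that each implication uses a single $\omega$ lying in the intersection of two full-measure sets, and such an $\omega$ exists because $\mm$ is a probability measure.
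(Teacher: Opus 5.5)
Your proposal is correct and is essentially the paper's argument: the paper just says the result follows immediately from Corollary~\ref{cor4}, and you have written out that deduction. You use the full-measure set $X$ on which $\gamma_\omega=\gamma_\mm$ and \eqref{eq1} holds, pick a single $\omega$ in the intersection of two full-measure sets, and observe for (b) that $\Gamma_\mm$ is determined by $\reallywidehat{\gamma_\mm}$ away from $0$.
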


\medskip

Let us complete the section by discussing the connection between the uniform existence of the Fourier--Bohr coefficients and continuity of the eigenfunctions.
The following result was basically proved in \cite[Theorem~6.7]{LSS}, but only partially stated. We include the short proof for completion.

\begin{proposition}\label{prop2} Let $\mu \in \cM^{\infty}_{}(G)$ and $\chi \in \widehat{G}$. Then, the Fourier--Bohr coefficient $a^{}_\chi(\mu)$ exists uniformly if and only if the Fourier--Bohr coefficient $a^{}_\chi(\omega)$ exists uniformly for all $\omega \in \XX(\mu)$.

Moreover, in this case the function
\[
\XX(\mu) \ni \omega \to a^{}_\chi(\omega) \in \CC
\]
is continuous and satisfies
\[
a^{}_\chi(\tau^{}_t\omega) \, = \, \overline{\chi(t)} \cdot a^{}_\chi(\omega)  \qquad \forall t \in G , \omega \in \XX(\mu) \,.
\]
\end{proposition}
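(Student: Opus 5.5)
The reverse implication is immediate, since $\mu\in\XX(\mu)$. For the forward implication, assume $a_\chi(\mu)$ exists uniformly along some van Hove sequence $\cA=(A_n)_n$. The strategy is to pass to a smoothed version of $\mu$, where vague convergence becomes uniform convergence on compacta. We then transfer the uniform Cauchy property from translates of $\mu$ to arbitrary limit points $\omega$, and finally return to measures via Proposition~\ref{Lem-FB}(b),(d).

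First fix $\varphi\in\Cc(G)$ with $\widehat{\varphi}(\chi)\neq 0$; such $\varphi$ exists, e.g.\ a suitable modulation of a positive bump. By Proposition~\ref{Lem-FB}(b), $a_\chi(\mu*\varphi)$ exists uniformly and equals $\widehat{\varphi}(\chi)a_\chi(\mu)=:c$. Write $f:=\mu*\varphi$, a bounded uniformly continuous function. Uniform existence means
\[
\sup_{s\in G}\Bigl|\frac{1}{|A_n|}\int_{s+A_n}\overline{\chi(t)}f(t)\,\dd t-c\Bigr|\,=:\,\eps_n\to 0 .
\]
Now take $\omega\in\XX(\mu)$, say $\omega=\lim_\alpha \tau_{t_\alpha}\mu$ vaguely; a sequence suffices by metrisability. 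Then $\omega*\varphi=\lim_\alpha \tau_{t_\alpha} f$ uniformly on compact sets. This uses translation boundedness of the orbit in $\cM_{C,V}(G)$, and is standard: $\mu*\varphi(x)=\mu(\tau_x\varphi^\dagger)$ is equicontinuous in $x$. Since $s+A_n$ is precompact, we may pass to the limit for each fixed $n$ and $s$, with $\tau_{t_\alpha}$ contributing the factor $\chi(t_\alpha)$:
\[
\frac{1}{|A_n|}\int_{s+A_n}\overline{\chi(t)}\,\tau_{t_\alpha}f(t)\,\dd t
=\overline{\chi(t_\alpha)}\,\frac{1}{|A_n|}\int_{s-t_\alpha+A_n}\overline{\chi(u)}f(u)\,\dd u .
\]
The last average lies within $\eps_n$ of $c$, uniformly in $s$ and $\alpha$. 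Passing to a subnet, we may assume $\overline{\chi(t_\alpha)}\to z$ with $|z|=1$. Taking the limit in $\alpha$ gives
\[
\Bigl|\frac{1}{|A_n|}\int_{s+A_n}\overline{\chi}\,\dd(\omega*\varphi)\cdot\widehat{}-zc\Bigr|\le\eps_n \quad\text{for all } s,
\]
that is, the averages of $\overline{\chi}\,(\omega*\varphi)$ over $s+A_n$ are within $\eps_n$ of $zc$ uniformly in $s$. Hence $a_\chi(\omega*\varphi)$ exists uniformly. By Proposition~\ref{Lem-FB}(d), $a_\chi(\omega)$ exists uniformly, with value $z\,a_\chi(\mu)$.

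\emph{Main obstacle.} The delicate point is that $z$ must not depend on the chosen net, so that the construction is well defined. This is automatic, however: the limit $a_\chi(\omega)$ is intrinsic to $\omega$, and every subnet produces the same value. The same computation gives the error bound
\[
\sup_{s}\bigl|\text{average over } s+A_n \text{ for } \omega*\varphi \;-\; \widehat{\varphi}(\chi)\,a_\chi(\omega)\bigr|\le\eps_n ,
\]
with $\eps_n$ independent of $\omega$.

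\emph{Continuity.} The function $\omega\mapsto \widehat{\varphi}(\chi)a_\chi(\omega)$ is therefore a uniform limit, as $n\to\infty$, of the functions
\[
\omega\mapsto\frac{1}{|A_n|}\int_{A_n}\overline{\chi(t)}\,(\omega*\varphi)(t)\,\dd t .
\]
Each of these is continuous on $\XX(\mu)$, again by the local uniform convergence of $\omega*\varphi$ under vague convergence. Dividing by $\widehat{\varphi}(\chi)\neq 0$ shows that $\omega\mapsto a_\chi(\omega)$ is continuous.

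\emph{Equivariance.} The identity is a change of variables:
\[
\frac{1}{|A_n|}\int_{A_n}\overline{\chi(t)}\,\dd\tau_t\omega(x)
=\overline{\chi(t)}\,\frac{1}{|A_n|}\int_{A_n-t}\overline{\chi(u)}\,\dd\omega(u).
\]
By uniform existence, the average over the shifted sets $-t+A_n$ has the same limit $a_\chi(\omega)$. Hence $a_\chi(\tau_t\omega)=\overline{\chi(t)}\,a_\chi(\omega)$.
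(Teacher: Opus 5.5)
Your proof is correct and follows essentially the same route as the paper. Both arguments smooth with $\varphi$ via Proposition~\ref{Lem-FB}(b), then use continuity in $\omega$ of the finite averages $\frac{1}{|A_n|}\int_{s+A_n}\overline{\chi(t)}\,\omega*\varphi(t)\,\dd t$ to transfer the uniform estimate from the dense orbit of $\mu$ to all of $\XX(\mu)$, and finally return to $\omega$ via Proposition~\ref{Lem-FB}(d). The only difference is bookkeeping: the paper shows the averages at $s=0$ are uniformly Cauchy on $\XX(\mu)$, whereas you pass the explicit bound $\eps_n$ to limit points with a limiting phase $z$, which gives uniformity in $s$ and independence of the limit from $s$ directly. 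You should also clean up the garbled expression in your displayed estimate.
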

\begin{remark} If $a^{}_\chi(\mu) \neq 0$ then
\[
f_\chi(\omega) \, := \, \overline{ a^{}_\chi(\omega) }
\]
is a continuous eigenfunction with eigenvalue $\chi$ \cite{LSS}.

On another hand, in this paper we are mainly interested in the case  $a^{}_\chi(\mu) = 0$. \exend
\end{remark}
\begin{proof}
$\bf \Longleftarrow$ is obvious.

\smallskip \noindent $\bf \Longrightarrow$ We follow closely the proof of \cite[Theorem~6.7]{LSS}. 

Fix one $\varphi \in \Cc(G)$ with $\widehat{\varphi}(\chi)=1$.

For each $n$, define $A^{}_n : \XX(\mu) \to \CC$ via
\[
A^{}_n(\omega) \, := \, \frac{1}{|A_n|} \int_{A_n}^{} \overline{\chi(t)} \, \omega*\varphi(t) \dd t \,.
\]
It is easy to see that $A^{}_n$ is continuous. Indeed, since the vague topology is metrisable on $\XX(\mu)$ we can work with sequences to show continuity. Let $\omega^{}_m, \omega \in \XX(\mu)$ be so that $\omega_{m}^{} \to \omega$ vaguely.
Then,
\[
\omega^{}_m*\varphi \, \stackrel{\mbox{pointwise}}{\relbar\joinrel\relbar\joinrel\relbar\joinrel\relbar\joinrel\relbar\joinrel\longrightarrow} \, \omega*\varphi \,.
\]
Moreover, by equi-translation boundedness, there exists a constant $C$ so that
\begin{align*}
  \| \omega^{}_m*\varphi \|_\infty \, &\leq \, C \qquad \forall n  \\
  \| \omega*\varphi \|_\infty \, &\leq \, C \qquad \forall n  \,.
\end{align*}
The Lebesgue dominated convergence theorem then gives
\[
\lim^{}_{m \to \infty} A^{}_n(\omega^{}_m) \,= \,  A^{}_n(\omega) \,.
\]
This shows that $A^{}_n$ is indeed continuous on $\XX(\mu)$.

Next, let us note that the uniform convergence of the Fourier--Bohr coefficients gives
\[
\lim_{n \to \infty}^{} \sup_{t \in G} \left|  A^{}_n(\tau_t \mu) - \overline{\chi(t)} a_\chi(\mu) \right| \, =\, 0 
\]
uniformly in $t$.

This implies that for each $\eps >0$ there exists some $N$ so that for all $m,n >N$ we have
\[
\left|A^{}_n(\tau_t \mu) - A^{}_m(\tau_t \mu) \right| \, \leq \,  \eps \qquad \forall t \in G \,.
\]
Since $A^{}_n, A^{}_m$ are continuous and 
\[
O^{}_\mu \, := \, \bigl\{ \tau^{}_t \mu \, : \, t \in G \bigr\} 
\] 
is dense in $\XX(\mu)$ we get
\[
\|A^{}_n - A^{}_m \|_\infty \, \leq \,  \eps \qquad \forall m,n >N \,.
\]
Therefore, $A^{}_n$ is Cauchy in $(\XX(\mu), \| \, \|_\infty)$ and hence convergent to a continuous function $F \in C(\XX(\mu))$.

This shows that for all $\omega \in \XX(\mu)$ we have
\[
F(\omega) \, = \, \lim_{n \to \infty} A^{}_n(\omega) \,=\,  a_\chi(\omega*\varphi)
\]
uniformly in $\omega$. In particular, this convergence is uniform on 
\[
O^{}_{\omega} \,:= \bigl\{ \tau_t^{} \omega  \, : \,  t \in G \bigr\} \, \subseteq \XX(\mu) \,.
\]

This implies that the Fourier--Bohr coefficient $a^{}_\chi(\omega*\varphi) $ exists uniformly, and hence, by Lemma~\ref{Lem-FB}, the Fourier--Bohr coefficient $a_\chi^{}(\omega)$ exists uniformly. Moreover,
\[
a_\chi^{}(\omega) \, = \, \widehat{\varphi(\chi)} a_\chi^{}(\omega) \, =\, a_\chi(\omega*\varphi) =F(\omega) \,.
\]
The claim and the statement after "moreover" follow.
\end{proof}

As an immediate consequence, we get:

\begin{corollary} Let $\mu \in \cM^{\infty}_{}(G)$ and $\chi \in \widehat{G}$. Then, the Fourier--Bohr coefficient $a^{}_\chi(\mu)$ is uniformly vanishing if and only if $a^{}_\chi(\omega)$ is uniformly vanishing for all $\omega \in \XX(\mu)$. \qed
\end{corollary}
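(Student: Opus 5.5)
The plan is to derive both directions from Proposition~\ref{prop2}. For the forward direction, the key points are the continuity and the equivariance of the map $\omega \mapsto a^{}_\chi(\omega)$, combined with the density of the orbit $O^{}_\mu$ in $\XX(\mu)$.

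The backward direction is immediate. Since $\mu = \tau^{}_0 \mu \in \XX(\mu)$, applying the hypothesis to $\omega = \mu$ shows that $a^{}_\chi(\mu)$ exists uniformly and equals $0$.

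For the forward direction, assume $a^{}_\chi(\mu)$ exists uniformly and $a^{}_\chi(\mu)=0$.
\begin{itemize}
\item[(1)] By Proposition~\ref{prop2}, $a^{}_\chi(\omega)$ exists uniformly for every $\omega \in \XX(\mu)$.
\item[(2)] Also by Proposition~\ref{prop2}, the map $F:\XX(\mu) \to \CC$, $F(\omega) := a^{}_\chi(\omega)$, is continuous and satisfies $F(\tau^{}_t \omega) = \overline{\chi(t)}\, F(\omega)$.
\item[(3)] Taking $\omega = \mu$ gives $F(\tau^{}_t \mu) = \overline{\chi(t)} \cdot 0 = 0$ for all $t \in G$. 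So $F$ vanishes on the orbit $O^{}_\mu$.
\item[(4)] By definition of the hull, $O^{}_\mu$ is dense in $\XX(\mu)$, and $F$ is continuous. Hence $F \equiv 0$ on $\XX(\mu)$, i.e.\ $a^{}_\chi(\omega)=0$ for every $\omega \in \XX(\mu)$.
\end{itemize}
Together with (1), this says $a^{}_\chi(\omega)$ is uniformly vanishing for all $\omega$.

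There is no real obstacle, since all the analytic content is already contained in Proposition~\ref{prop2}. The one point to check is that the value $a^{}_\chi(\omega)$ in the conclusion is unambiguous, i.e.\ does not depend on the van Hove sequence. This holds by Proposition~\ref{cor2}, because uniform existence forces the same limit along every van Hove sequence.
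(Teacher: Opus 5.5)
Your proof is correct and is exactly the argument the paper has in mind when it calls this an immediate consequence of Proposition~\ref{prop2}. The backward direction is trivial since $\mu \in \XX(\mu)$. For the forward direction you use uniform existence on the whole hull, continuity and equivariance of $\omega \mapsto a_\chi(\omega)$, and density of $O_\mu$ to conclude that this map vanishes identically.
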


\section{Uniform vanishing of the Fourier--Bohr coefficient}

In this section we give various characterisations of the uniform vanishing of the Fourier--Bohr coefficients.

\begin{theorem}\label{T1} Let $\mu \in \cM^\infty(G)$ and let $\cA$ be a van Hove sequence. Let $\chi \in \widehat{G}$. Then, the following are equivalent:
\begin{itemize}
\item[(i)] The Fourier--Bohr coefficient $a_\chi^{\cA}(\mu)$ are uniformly vanishing along $\cA$.
\item[(ii)] For all van Hove sequence $\cB$, Fourier--Bohr coefficient $a_\chi^{\cB}(\mu)$ is uniformly vanishing.
\item[(iii)]  The Fourier--Bohr coefficient $a_\chi^{\cA}(\omega)$ is uniformly vanishing for all $\omega \in \XX(\mu)$.
\item[(iv)] For each van Hove sequence $\cB$ the following Fourier--Bohr coefficient exists and
\[
a_\chi^\cB(\mu) \,=\, 0 \,.
\]
\item[(v)]If the Fourier--Bohr coefficient $a_\chi^\cB(\mu)$ exists along some van Hove sequence $\cB$ then
  \[
a_\chi^\cB(\mu) \,= \, 0 \,.
\]
\item[(vi)] For every autocorrelation $\gamma^{}_{\mu}$ of $\mu$ with respect to some van Hove sequence $\cB$ we have
  \[
  \reallywidehat{\gamma^{}_{\mu}}(\{ \chi \})\,=\,0 \,.
  \]
\item[(vii)] For all $G$-invariant probability measures $\mm$ on $\XX(\mu)$ we have
\[
\reallywidehat{\gamma_{\mm}}(\{ \chi \}) \,=\, 0 \,.
\]
\item[(viii)] For all ergodic measures $\mm$ on $\XX(\mu)$ we have
\[
\reallywidehat{\gamma_{\mm}}(\{ \chi \})\,=\, 0 \,.
\]
\item[(ix)] For every $\omega \in \XX(\mu)$ and every autocorrelation $\gamma_{\omega}^{}$ of $\omega$ along some van Hove sequence $\cB$ we have
  \[
  \reallywidehat{\gamma^{}_{\omega}}(\{ \chi \})\,=\,0 \,.
  \]
\end{itemize}
\end{theorem}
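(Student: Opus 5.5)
We plan to run the proof as a cycle of implications, using the earlier results of the paper as black boxes.

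\emph{The easy half of the cycle.} The equivalences (i)$\Leftrightarrow$(ii)$\Leftrightarrow$(iv) follow from Proposition~\ref{cor2}. Uniform existence along $\cA$ gives existence along every $\cB$, with the same value $0$. Conversely, existence along every $\cB$ gives uniform existence, and the common value is $a^\cA_\chi(\mu)=0$. The equivalence (i)$\Leftrightarrow$(iii) is the Corollary following Proposition~\ref{prop2}. The implication (iv)$\Rightarrow$(v) is trivial.

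\emph{From (v) to the spectral conditions.} For (v)$\Rightarrow$(vi), fix an autocorrelation $\gamma_\mu$ along $\cB$. By \cite[Corollary~7.4]{LS} we may pass to a subsequence $\cB'$ of $\cB$ along which $a^{\cB'}_\chi(\mu)$ exists; by (v) it equals $0$. Along $\cB'$ the autocorrelation is still $\gamma_\mu$. The remaining point is the inequality
\[
\reallywidehat{\gamma_\mu}(\{\chi\})\le \liminf|a_\chi^{\cB'}(\mu)|^2 .
\]
This would be contradicted by Proposition~\ref{PropBT} unless we argue differently, so we argue as follows. Proposition~\ref{Prop1} already states (i)$\Leftrightarrow$(ii) there, namely that (vi) is equivalent to (v). We invoke it directly, giving (v)$\Leftrightarrow$(vi) and, via its item (iii), also (vi)$\Rightarrow$(i).

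\emph{Dynamical conditions via (iii).} Assume (iii). Let $\mm$ be a $G$-invariant probability measure on $\XX(\mu)$. By the ergodic decomposition, $\gamma_\mm$ is the barycentre of the $\gamma_{\mathsf n}$ over ergodic $\mathsf n$: the relation $\gamma_\mm*\varphi*\widetilde\varphi(t)=\langle F_\varphi,\tau_tF_\varphi\rangle$ is linear in $\mm$. Hence (vii)$\Leftrightarrow$(viii) holds, since the point mass $\reallywidehat{\gamma_\mm}(\{\chi\})$ integrates accordingly, with positivity in both directions.

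Next we show (iii)$\Rightarrow$(ix). Given $\omega\in\XX(\mu)$ and an autocorrelation $\gamma_\omega$ along $\cB$, we have $\XX(\omega)\subseteq\XX(\mu)$. Statement (iii) holds for all elements of $\XX(\omega)$, hence (i) holds for $\omega$. Applying (i)$\Rightarrow$(vi), already established, to $\omega$ gives $\reallywidehat{\gamma_\omega}(\{\chi\})=0$.

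We then show (ix)$\Rightarrow$(vii). Let $\mm$ be invariant and ergodic first. By Corollary~\ref{cor4}, a generic $\omega$ has $\gamma_\omega=\gamma_\mm$, so $\reallywidehat{\gamma_\mm}(\{\chi\})=0$. This gives (viii), and hence (vii) by the decomposition argument.

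Finally, (vii)$\Rightarrow$(vi) follows from Corollary~\ref{Cor1}: every autocorrelation of $\mu$ is some $\gamma_\mm$.

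\emph{Assembling the cycle.} Collecting the steps,
\begin{align*}
&\text{(i)}\Leftrightarrow\text{(ii)}\Leftrightarrow\text{(iii)}\Leftrightarrow\text{(iv)}\Rightarrow\text{(v)}\Leftrightarrow\text{(vi)}\Rightarrow\text{(i)},\\
&\text{(iii)}\Rightarrow\text{(ix)}\Rightarrow\text{(viii)}\Leftrightarrow\text{(vii)}\Rightarrow\text{(vi)}.
\end{align*}
This closes every equivalence.

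\emph{Main obstacle.} The most delicate step is (viii)$\Rightarrow$(vii). Pointwise evaluation $\reallywidehat{\gamma_\mm}(\{\chi\})$ must commute with the ergodic decomposition. We would prove this by writing
\[
\reallywidehat{\gamma}(\{\chi\})=\lim_n \frac{1}{|A_n|^2}\int_{A_n}\int_{A_n}\overline{\chi(s-t)}\,\dd\gamma*\varphi*\widetilde\varphi\ldots
\]
or, more simply, by using the representation $\reallywidehat{\gamma_\mm}(\{\chi\})=\|P_\chi F_\varphi\|^2/|\widehat\varphi(\chi)|^2$, where $P_\chi$ is the spectral projection in $L^2(\mm)$. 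This quantity is an affine integral over ergodic components by the mean ergodic theorem. If this becomes messy, we bypass it instead. Every invariant $\mm$ with $\mu$ generic satisfies $\gamma_\mm=\gamma_\mu$, which is all that (vi) requires. For (vii) itself, we use (iii) together with Theorem~\ref{Thm-BTDS}-type arguments applied to generic points along a Birkhoff van Hove sequence.
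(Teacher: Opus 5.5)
Your proof is correct. The measure-theoretic equivalences match the paper's: (i)$\Leftrightarrow$(ii)$\Leftrightarrow$(iv) from Proposition~\ref{cor2}, (i)$\Leftrightarrow$(iii) from Proposition~\ref{prop2}, (i)$\Leftrightarrow$(v)$\Leftrightarrow$(vi) from Proposition~\ref{Prop1}, and (iii)$\Rightarrow$(ix) by applying Proposition~\ref{Prop1} to each $\omega\in\XX(\mu)$.

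The dynamical part is routed differently. The paper proves (iii)$\Rightarrow$(viii) with Lenz's almost-sure (CPP) \cite[Theorem~5(b)]{DL}, and cites \cite[Corollary~7.16]{FRS} for (viii)$\Rightarrow$(vii). You instead prove (ix)$\Rightarrow$(viii) by realising $\gamma_\mm$, for ergodic $\mm$, as the autocorrelation of an $\mm$-generic point of $\XX(\mu)$ (Theorem~\ref{thm:BL}). This needs no (CPP), provided you take the van Hove sequence to be tempered, which (ix) allows. You also replace \cite{FRS} by an ergodic-decomposition argument. You correctly spot the delicate point: $\nu\mapsto\nu(\{\chi\})$ is not vaguely continuous, so the barycentre formula for $\gamma_\mm$ alone is not enough. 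Your fix works. The quantity $|\widehat{\varphi}(\chi)|^2\,\reallywidehat{\gamma_\mm}(\{\chi\})=\|P_\chi F_\varphi\|^2_{L^2(\mm)}$ is the $\chi$-twisted mean of $t\mapsto\gamma_\mm*\varphi*\widetilde{\varphi}(t)$. It is therefore a bounded pointwise limit of continuous affine functions of $\mm$, so it integrates over the ergodic components by dominated convergence. A nonnegative integrand vanishing on all ergodic measures then gives $0$. The paper's route is two lines but rests on two external theorems. Yours is self-contained apart from the identity $\sigma_{F_\varphi}=|\widehat{\varphi}|^2\,\reallywidehat{\gamma_\mm}$ and the existence of the ergodic decomposition.

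Two side remarks. First, the inequality you discard in the (v)$\Rightarrow$(vi) paragraph is false, not merely hard to prove. For $\mu=\sgn(x)\lambda$, $\chi=0$ and $A_n=[-n,n]$ it reads $1\le 0$. This is why (v) must be used along all van Hove sequences, as Proposition~\ref{Prop1} does.

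Second, your generic-point fallback cannot handle non-ergodic $\mm$, which need not have generic points. However, the computation above already gives (iii)$\Rightarrow$(vii) directly for every invariant $\mm$. By Fubini, $|\widehat{\varphi}(\chi)|^2\,\reallywidehat{\gamma_\mm}(\{\chi\})$ is the limit of $\int \bigl(\frac{1}{|A_n|}\int_{A_n}\overline{\chi(t)}\,(\omega*\varphi)(t)\,\dd t\bigr)\,\overline{F_\varphi(\omega)}\,\dd\mm(\omega)$. The inner averages are uniformly bounded and tend to $a^\cA_\chi(\omega*\varphi)=\widehat{\varphi}(\chi)\,a^\cA_\chi(\omega)=0$ for every $\omega$, by (iii) and Proposition~\ref{Lem-FB}. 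So neither generic points nor the ergodic decomposition is needed.
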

\begin{proof} (i) $\Longleftrightarrow$ (v)  $\Longleftrightarrow$ (vi) and (iii)  $\Longleftrightarrow$ (ix) follow from Proposition~\ref{Prop1}.

\smallskip \noindent (i) $\Longleftrightarrow$ (ii)  $\Longleftrightarrow$ (iv) follows from Corollary~\ref{cor2}.

\smallskip \noindent (i) $\Longleftrightarrow$ (iii) follows from Proposition~\ref{prop2}.

\smallskip \noindent(iii) $\Longrightarrow$ (viii)

Let $\mm$ be an ergodic measure on $\XX(\mu)$. By \cite[Theorem~5(b)]{DL} for almost all $\omega \in \XX(\mu)$ the Fourier--Bohr coefficient $a_\chi^\cA(\omega)$ exists and
\[
\reallywidehat{\gamma_\mm}(\{ \chi \}) \,=\, |a_\chi^\cA(\omega)|^2  \,.
\]
By (iii) we have $a_\chi^\cA(\omega)=0$ and (viii) follows.

\smallskip \noindent(viii) $\Longrightarrow$ (vii) follows from \cite[Corollary~7.16]{FRS}.

\smallskip \noindent(vii) $\Longrightarrow$ (vi) follows from Corollary~\ref{Cor1}.

\end{proof}

\medskip

Let us next state the following immediate consequences of Theorem~\ref{T1}.  The first result is a stronger version of \cite[Corollary~4.2]{NS}, as it removes the strong assumption on the uniform existence of Fourier--Bohr coefficients. It also slightly improves \cite[Corollary~7.8]{LS}.

\begin{theorem}\label{T11} Let $\mu \in \cM^\infty(G)$. Then, the following are equivalent:
\begin{itemize}
\item[(i)] For every autocorrelation $\gamma^{}_{\mu}$ of $\mu$ with respect to some van Hove sequence $\cB$, the diffraction spectrum $\reallywidehat{\gamma^{}_{\mu}}$ is continuous.
\item[(ii)] The Fourier--Bohr coefficient $a_\chi(\mu)$ is uniformly vanishing for all $\chi \in \widehat{G}$.
\item[(iii)]  The Fourier--Bohr coefficient $a_\chi^{\cA}(\omega)$ is uniformly vanishing for all $\omega \in \XX(\mu)$ and all $\chi \in \widehat{G}$.
\item[(iv)] For each van Hove sequence $\cB$ and all $\chi \in \widehat{G}$ the following Fourier--Bohr coefficient exists and
\[
a_\chi^\cB(\mu) \,=\, 0 \,.
\]
\item[(v)]If $a_\chi^\cB(\mu)$ exists for some van Hove sequence $\cB$ and some $\chi \in \widehat{G}$ then
  \[
a_\chi^\cB(\mu) \,=\, 0 \,.
\]
\item[(vi)] For all $G$-invariant probability measures $\mm$ on $\XX(\mu)$, the diffraction measure $\reallywidehat{\gamma_{\mm}}$ is continuous.
\item[(vii)] For all ergodic measures $\mm$ on $\XX(\mu)$, the diffraction measure $\reallywidehat{\gamma_{\mm}}$ is continuous.
\item[(viii)] For every $\omega \in \XX(\mu)$ and every autocorrelation $\gamma_{\omega}^{}$ of $\omega$ along some van Hove sequence $\cB$, the diffraction $\reallywidehat{\gamma^{}_{\omega}}$ is continuous. \qed
\end{itemize} 
\end{theorem}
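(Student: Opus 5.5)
Theorem~\ref{T11} follows by applying Theorem~\ref{T1} at every character $\chi \in \widehat{G}$ simultaneously and then translating the pointwise statements $\reallywidehat{\gamma}(\{\chi\})=0$ for all $\chi$ into continuity of the measure. The key observation is that a positive measure $\sigma$ on $\widehat{G}$ is continuous if and only if $\sigma(\{\chi\})=0$ for every $\chi \in \widehat{G}$; this is just the definition of a continuous measure. Hence condition (i) of Theorem~\ref{T11} is equivalent to asking that condition (vi) of Theorem~\ref{T1} holds for every $\chi$. Likewise, (vi), (vii) and (viii) of Theorem~\ref{T11} are equivalent to (vii), (viii) and (ix) of Theorem~\ref{T1}, respectively, holding for every $\chi$.

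On the Fourier--Bohr side, conditions (ii), (iii), (iv) and (v) of Theorem~\ref{T11} are the quantifications over all $\chi \in \widehat{G}$ of conditions (i) (equivalently (ii)), (iii), (iv) and (v) of Theorem~\ref{T1}. For (ii) I would use Corollary~\ref{cor2}, so that ``uniformly vanishing'' does not depend on the chosen van Hove sequence and the notation $a_\chi(\mu)$ is unambiguous. For (iii) I would fix the van Hove sequence $\cA$ of Theorem~\ref{T1}; by Corollary~\ref{cor2} applied to each $\omega$, the choice is again irrelevant.

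The argument then runs as follows. Assume any one condition of Theorem~\ref{T11}. For each fixed $\chi$, the corresponding condition of Theorem~\ref{T1} holds. Theorem~\ref{T1} then gives all nine of its conditions for that $\chi$. Since $\chi$ was arbitrary, every condition of Theorem~\ref{T11} follows.

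I do not expect a real obstacle. The only point needing care is the order of quantifiers in (i), (vi), (vii) and (viii) of Theorem~\ref{T11}. These read ``for every autocorrelation (or invariant measure, or $\omega$), the measure is continuous.'' Expanding continuity gives ``for every autocorrelation and every $\chi$, the atom at $\chi$ vanishes,'' and two universal quantifiers commute. So this is the same as ``for every $\chi$, for every autocorrelation, the atom at $\chi$ vanishes,'' which is exactly the quantified form of Theorem~\ref{T1}(vi)--(ix). No countability or measurability argument is needed, because everything is pointwise in $\chi$.
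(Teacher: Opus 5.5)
Your proposal is correct and matches the paper's approach. The paper states Theorem~\ref{T11} without a written proof, as an immediate consequence of Theorem~\ref{T1}: apply it at each $\chi \in \widehat{G}$, use that a positive measure is continuous exactly when it has no atoms, and commute the universal quantifiers, as you do.
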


\begin{theorem}\label{T111}  Let $\mu \in \cM^\infty(G)$. Then, the following are equivalent:
\begin{itemize}
\item[(i)] For every autocorrelation $\gamma_{\mu}$ of $\mu$ with respect to some van Hove sequence $\cB$, the Bartlett diffraction spectrum $\Gamma_\mu$ is continuous.
\item[(ii)] The Fourier--Bohr coefficient $a_\chi(\mu)$ is uniformly vanishing for all $0 \neq \chi \in \widehat{G}$.
\item[(iii)]  The Fourier--Bohr coefficient $a_\chi^{\cA}(\omega)$ is uniformly vanishing for all $\omega \in \XX(\mu)$ and all $0 \neq \chi \in \widehat{G}$.
\item[(iv)] For each van Hove sequence $\cB$ and all $0 \neq \chi \in \widehat{G}$ the following Fourier--Bohr coefficient exists and
\[
a_\chi^\cB(\mu) \,=\, 0 \,.
\]
\item[(v)]If $a_\chi^\cB(\mu)$ exists for some van Hove sequence $\cB$ and some $0 \neq \chi \in \widehat{G}$ then
  \[
a_\chi^\cB(\mu)\,=\, 0 \,.
\]
\item[(vi)] For each $G$-invariant probability measures $\mm$ on $\XX(\mu)$ the Bartlett diffraction $\Gamma^{}_\mm$ is continuous. 
\item[(vii)] For each ergodic measures $\mm$ on $\XX(\mu)$ the Bartlett diffraction $\Gamma^{}_\mm$ is continuous. 
\item[(viii)] For every $\omega \in \XX(\mu)$ and every autocorrelation $\gamma_{\omega}^{}$ of $\omega$ along some van Hove sequence $\cB$, the Bartlett diffraction $\Gamma^{}_{\omega}$ is continuous. \qed
\end{itemize} 
\end{theorem}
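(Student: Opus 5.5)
The plan is to derive Theorem~\ref{T111} from Theorem~\ref{T1}, applied character by character to all $0 \neq \chi \in \widehat{G}$. This mirrors how Theorem~\ref{T11} follows from Theorem~\ref{T1} applied to all $\chi$. The key observation is that continuity of a Bartlett spectrum is a pointwise statement. For any positive definite $\gamma$ we have $\Gamma(\{0\})=0$ by construction and $\Gamma(\{\chi\}) = \reallywidehat{\gamma}(\{\chi\})$ for $\chi \neq 0$. So $\Gamma$ is continuous exactly when $\reallywidehat{\gamma}(\{\chi\})=0$ for every $0\neq \chi \in \widehat{G}$. This holds verbatim for $\Gamma_\mu$, $\Gamma_\mm$ and $\Gamma_\omega$.

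With this observation, each condition of Theorem~\ref{T111} is the conjunction, over all $0\neq\chi$, of the corresponding condition of Theorem~\ref{T1}:
\begin{itemize}
\item[] (i) corresponds to (vi) of Theorem~\ref{T1};
\item[] (ii) and (iii) correspond to (i) and (iii);
\item[] (iv) and (v) correspond to (iv) and (v);
\item[] (vi), (vii) and (viii) correspond to (vii), (viii) and (ix).
\end{itemize}
In (i) and (viii) the quantifiers over autocorrelations and over $\chi$ commute, because both are universal quantifiers. So I would fix $0\neq\chi$, invoke the equivalence of all nine conditions in Theorem~\ref{T1} for that $\chi$, and then take the intersection over all $\chi\neq 0$. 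Since each equivalence holds for every individual $\chi$, the conjunctions are equivalent too.

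Two small points need checking. First, in (ii) the notation $a_\chi(\mu)$ without a van Hove sequence is justified by Proposition~\ref{cor2}: uniform vanishing along one van Hove sequence gives uniform vanishing along all of them. Second, (iii) refers to the fixed sequence $\cA$, which is harmless since Theorem~\ref{T1} holds for an arbitrary fixed $\cA$. I expect no real obstacle. The only care needed is the bookkeeping identity $\Gamma(\{\chi\})=\reallywidehat{\gamma}(\{\chi\})$ for $\chi\neq0$, which holds because $\delta_0$ has no mass at $\chi$.
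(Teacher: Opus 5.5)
Your proposal is correct and matches the paper's approach. The paper gives no separate proof: it states Theorem~\ref{T111} as an immediate consequence of Theorem~\ref{T1}, obtained by applying it to each $0 \neq \chi \in \widehat{G}$ and using that $\Gamma(\{0\})=0$ and $\Gamma(\{\chi\})=\reallywidehat{\gamma}(\{\chi\})$ for $\chi \neq 0$, which is exactly your argument.
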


\medskip 

In the case when $\mu$ has an unique autocorrelation, and in particular when $\XX(\mu)$ is uniquely ergodic, we get the following immediate consequences:

\begin{corollary} Let $\mu \in \cM^\infty(G)$ be a measure which has an unique autocorrelation $\gamma$. Then, the following are equivalent:
\begin{itemize}
\item[(i)] The diffraction spectrum $\reallywidehat{\gamma}$ is continuous.
\item[(ii)] The Fourier--Bohr coefficient $a_\chi(\mu)$ are uniformly vanishing for all $\chi \in \widehat{G}$.
\item[(iii)]  The Fourier--Bohr coefficient $a_\chi^{\cA}(\omega)$ are uniformly vanishing for all $\omega \in \XX(\mu)$ and all $\chi \in \widehat{G}$.
\item[(iv)] For each van Hove sequence $\cB$ and all $\chi \in \widehat{G}$ the following Fourier--Bohr coefficient exists and
\[
a_\chi^\cB(\mu) \,=\, 0 \,.
\]
\item[(v)]If $a_\chi^\cB(\mu)$ exists along some van Hove sequence $\cB$ for some $\chi \in \widehat{G}$ then
  \[
a_\chi^\cB(\mu) \,=\, 0 \,.\tag*{$ \qed $}
\]
\end{itemize} 
\end{corollary}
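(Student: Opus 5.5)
The plan is to reduce this corollary to Theorem~\ref{T11}, using the unique autocorrelation assumption to collapse the quantifiers over autocorrelations. First I will show that (i) of the corollary is equivalent to (i) of Theorem~\ref{T11}. By hypothesis every autocorrelation of $\mu$ along any van Hove sequence $\cB$ for which it exists equals $\gamma$. Moreover, by the preceding remark, unique autocorrelation means the autocorrelation exists along every van Hove sequence. Hence the statement ``for every autocorrelation $\gamma_\mu$ along some $\cB$, $\reallywidehat{\gamma_\mu}$ is continuous'' says exactly that $\reallywidehat{\gamma}$ is continuous. This is nonvacuous: by the lemma on existence along subsequences at least one autocorrelation exists, and it equals $\gamma$.

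Next I note that items (ii)--(v) of the corollary are literally items (ii)--(v) of Theorem~\ref{T11}. In (iii) the van Hove sequence $\cA$ is arbitrary but fixed; by Proposition~\ref{cor2}, uniform vanishing does not depend on that choice. Theorem~\ref{T11} therefore gives (i) $\Longleftrightarrow$ (ii) $\Longleftrightarrow$ (iii) $\Longleftrightarrow$ (iv) $\Longleftrightarrow$ (v) at once.

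If one prefers not to rely on the (unproved, marked $\qed$) Theorem~\ref{T11} as a black box, each equivalence can be obtained pointwise in $\chi$ from Theorem~\ref{T1}. Continuity of $\reallywidehat{\gamma}$ means $\reallywidehat{\gamma}(\{\chi\})=0$ for all $\chi$. Since $\gamma$ is the only autocorrelation, this is item (vi) of Theorem~\ref{T1} for every $\chi$, and Theorem~\ref{T1} converts it into items (i), (iii), (iv), (v) for every $\chi$. Quantifying over all $\chi\in\widehat{G}$ then yields the corollary.

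There is no real obstacle. The only point needing care is the translation between ``every autocorrelation'' and ``the autocorrelation $\gamma$''. This rests on the existence of at least one autocorrelation together with uniqueness, both of which are provided by the hypothesis and the earlier lemma.
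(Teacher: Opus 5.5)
Your proposal is correct and takes the paper's route: the paper gives no separate proof and states this corollary as an immediate consequence of Theorem~\ref{T11}, where the unique-autocorrelation hypothesis reduces item (i) of Theorem~\ref{T11} to continuity of $\reallywidehat{\gamma}$ and items (ii)--(v) carry over verbatim. Your checks that at least one autocorrelation exists (so the reduction is not vacuous) and that (iii) does not depend on the choice of $\cA$ (by Proposition~\ref{cor2}) are the only points that need verifying, and you handle both correctly.
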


\begin{corollary} Let $\mu \in \cM^\infty(G)$ be a measure which has an unique autocorrelation $\gamma$. Then, the following are equivalent:
\begin{itemize}
\item[(i)] The Bartlett diffraction spectrum $\Gamma$ of $\mu$  is continuous.
\item[(ii)] The Fourier--Bohr coefficient $a_\chi(\mu)$ are uniformly vanishing for all $0 \neq \chi \in \widehat{G}$.
\item[(iii)]  The Fourier--Bohr coefficient $a_\chi^{\cA}(\omega)$ are uniformly vanishing for all $\omega \in \XX(\mu)$ and all $0 \neq \chi \in \widehat{G}$.
\item[(iv)] For each van Hove sequence $\cB$ and all $0 \neq \chi \in \widehat{G}$ the following Fourier--Bohr coefficient exists and
\[
a_\chi^\cB(\mu) \,=\, 0 \,.
\]
\item[(v)]If $a_\chi^\cB(\mu)$ exists along some van Hove sequence $\cB$ for some $0 \neq \chi \in \widehat{G}$ then
  \[
a_\chi^\cB(\mu) \,=\, 0 \,.\tag*{$ \qed $}
\]
\end{itemize} 
\end{corollary}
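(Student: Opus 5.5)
The last statement is the Bartlett version of the corollary for measures with a unique autocorrelation. I would deduce it from Theorem~\ref{T111} by showing that, in this situation, condition (i) here is equivalent to condition (i) of Theorem~\ref{T111}. Conditions (ii)--(v) here are literally conditions (ii)--(v) of Theorem~\ref{T111}, so they are already equivalent to each other and to (i) of Theorem~\ref{T111}. Nothing beyond Theorem~\ref{T111} and the definition of unique autocorrelation should be needed.

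Assume $\mu$ has the unique autocorrelation $\gamma$. By the remark defining this notion, the autocorrelation of $\mu$ exists along every van Hove sequence $\cB$, and it always equals $\gamma$. Hence the family $\{\gamma_\mu\}$ quantified over in Theorem~\ref{T111}(i) is the single measure $\gamma$. Its Bartlett spectrum $\Gamma_\mu$ is the $\Gamma$ of the present statement. So "for every autocorrelation $\gamma_\mu$ the measure $\Gamma_\mu$ is continuous" is equivalent to "$\Gamma$ is continuous", which is (i) here.

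For non-emptiness of the family, the lemma guaranteeing that an autocorrelation exists along a subsequence of any van Hove sequence already ensures at least one autocorrelation exists. Under unique autocorrelation it is in fact $\gamma$ along every van Hove sequence.

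Combining the two steps, (i) $\Longleftrightarrow$ Theorem~\ref{T111}(i) $\Longleftrightarrow$ (ii) $\Longleftrightarrow$ (iii) $\Longleftrightarrow$ (iv) $\Longleftrightarrow$ (v). The same argument, applied to Theorem~\ref{T11}, proves the preceding corollary for continuity of $\reallywidehat{\gamma}$. The only point needing care is the identification of the two versions of (i), and the definition of unique autocorrelation resolves it immediately. I expect no genuine obstacle, since all the analytic content is contained in Theorem~\ref{T1} and its consequences.
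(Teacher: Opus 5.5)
Your proposal is correct and matches the paper, which states this corollary as an immediate consequence of Theorem~\ref{T111}. Under unique autocorrelation, every autocorrelation of $\mu$ along any van Hove sequence equals $\gamma$, so condition (i) of Theorem~\ref{T111} reduces to continuity of $\Gamma$, and conditions (ii)--(v) coincide verbatim.
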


\section{Continuous dynamical spectrum}

In this section we discuss the continuity of the dynamical spectrum. Let us start with the following result, which generalizes \cite[Lemma~4.6]{LS3} and characterizes the eigenvalues of the Dynamical system. The result can be seen as a dynamical spectrum version of Theorem~\ref{T1}.

\begin{theorem} Let $(X,G)$ be a metric dynamical system and let $\cA$ be a F\o lner sequence. Let $\chi \in \widehat{G}$. Then, the following are equivalent:
\begin{itemize}
  \item[(i)] For each $G$-invariant measure $\mm$ on $X$, $\chi$ is not an eigenvalue for $\mm$.
  \item[(ii)] For each ergodic measure $\mm$ on $X$, $\chi$ is not an eigenvalue for $\mm$.
  \item[(iii)]  For each $f\in C(X)$ and any sequence $\chi_n \in \widehat{G}$ converging pointwise to $\chi$  we have
\[
\lim_n \frac{1}{|A_n|} \int^{}_{A_n}  f(t.x) \cdot \overline{\chi_n(t)} \, \dd t \,=\, 0
\]
uniformly in $x\in X$.
  \item[(iv)] For each $f\in C(X)$ we have
\[
\lim_n \frac{1}{|A_n|} \int^{}_{s+A_n}  f(t.x) \cdot \overline{\chi(t)} \, \dd t \,= \, 0
\]
uniformly in $x\in X, s \in G$.
  \item[(v)] The set of functions $f \in C(X)$ with the property that
\[
\lim_n \frac{1}{|A_n|} \int^{}_{s+A_n}  f(t.x) \cdot \overline{\chi(t)} \, \dd t \,=\, 0
\]
uniformly in $x\in X, s \in G$ is dense in $C(X)$.
\end{itemize}
\end{theorem}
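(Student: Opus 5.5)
I will prove the cycle (i) $\Rightarrow$ (ii) $\Rightarrow$ (iii) $\Rightarrow$ (iv) $\Rightarrow$ (v) $\Rightarrow$ (i). Two implications are formal. (i) $\Rightarrow$ (ii) is immediate, since ergodic measures are invariant. (iv) $\Rightarrow$ (v) is trivial, because the set in (v) is all of $C(X)$.

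\textbf{(ii) $\Rightarrow$ (i).} I also want this direction, to keep the roles symmetric. Suppose $\chi$ is an eigenvalue for some invariant $\mm$, with eigenfunction $f\in L^2(\mm)$. Take the ergodic decomposition $\mm=\int \mm_y\,\dd\nu(y)$. Then $f$ restricts to an eigenfunction for $\mm_y$ on a set of $y$ of positive $\nu$-measure, since $f\neq 0$ and the eigen-equation passes to almost every fibre. This is the same mechanism as \cite[Corollary~7.16]{FRS} used in Theorem~\ref{T1}.

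\textbf{(iii) $\Rightarrow$ (iv).} First I reduce uniformity in $s$ to uniformity in $x$. Write $\int_{s+A_n} f(t.x)\overline{\chi(t)}\,\dd t=\overline{\chi(s)}\int_{A_n} f(t.(s.x))\overline{\chi(t)}\,\dd t$. Uniformity in $x$ therefore already gives uniformity in $(x,s)$, so (iv) is (iii) with the constant sequence $\chi_n=\chi$.

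\textbf{(v) $\Rightarrow$ (i).} Let $\mm$ be invariant and $g\in L^2(\mm)$ satisfy $g(t.x)=\chi(t)g(x)$. For $f$ in the dense set of (v), dominated convergence gives
\[
0=\lim_n \int_X \frac{1}{|A_n|}\int_{A_n} f(t.x)\overline{\chi(t)}\,\dd t\;\overline{g(x)}\,\dd\mm(x).
\]
By invariance of $\mm$ and the eigen-equation, the inner expression equals $\langle f,g\rangle$ for every $n$, so $\langle f,g\rangle=0$. Since $C(X)$ is dense in $L^2(\mm)$ and the good $f$ are sup-norm dense in $C(X)$, we get $g=0$.

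\textbf{(ii) $\Rightarrow$ (iii).} This is the main step, and I argue by contradiction. If (iii) fails, there are $f$, $\eps>0$, $\chi_n\to\chi$ (passing to a subsequence), and points $x_n$ with
\[
\Bigl|\frac1{|A_n|}\int_{A_n} f(t.x_n)\overline{\chi_n(t)}\,\dd t\Bigr|\ge\eps .
\]
Pass to the skew product $Y=X\times\mathbb{T}$ with the action $t.(x,z)=(t.x,\chi(t)z)$, and define
\[
\mm_n:=\frac1{|A_n|}\int_{A_n}\delta_{t.(x_n,1)}\,\dd t.
\]
By weak$^*$ compactness a subsequence converges to some $\mathsf{M}$, and the F\o lner property makes $\mathsf{M}$ invariant.

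The function $F(x,z)=f(x)\bar z$ is continuous, but the difference between $\chi_n$ and $\chi$ must be handled. Pointwise convergence of characters on a second countable LCA group is uniform on compacta, which is fine on fixed compacts. On the growing sets $A_n$, however, $\chi_n\overline{\chi}$ need not be close to $1$. I would therefore replace $\mathbb{T}$ by carrying the phase $\chi_n(t)\overline{\chi(t)}$ along: consider the measures $\frac1{|A_n|}\int_{A_n}\delta_{(t.x_n,\chi_n(t))}\,\dd t$ on $X\times\mathbb{T}$. Invariance of a limit point under $(x,z)\mapsto(s.x,\chi(s)z)$ then follows from F\o lner plus $\chi_n(s)\to\chi(s)$ for each fixed $s$.

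The limit $\mathsf{M}$ satisfies $|\int F\,\dd\mathsf{M}|\ge\eps$. Next, $h(x):=\int \bar z\,\dd\mathsf{M}_x(z)\,$, the disintegration of $\mathsf{M}$ over its marginal $\mm$ on $X$, is a nonzero $L^2(\mm)$ function, since $\int f h\,\dd\mm=\int F\,\dd\mathsf{M}\ne0$. It satisfies $h(t.x)=\overline{\chi(t)}h(x)$, so $\bar h$ is an eigenfunction with eigenvalue $\chi$ for the invariant measure $\mm$. This contradicts (i), hence (ii), by the step above.

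Verifying this invariance and the disintegration identity carefully is where I expect the real work to lie.
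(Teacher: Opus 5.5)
Your proof is correct, and it takes a more self-contained route than the paper. The paper does not prove the hard directions itself:
\begin{itemize}
\item it imports (i)$\Leftrightarrow$(ii)$\Leftrightarrow$(iii) from \cite[Lemma~4.6]{LS3};
\item it gets (iii)$\Rightarrow$(iv) by the same substitution $x\mapsto s.x$ that you use;
\item it obtains (iv)$\Rightarrow$(ii) by repeating the argument of \cite{LS3};
\item it closes the loop with (v)$\Rightarrow$(iv), using that the twisted averages of $f$ and $g$ differ by at most $\|f-g\|_\infty$.
\end{itemize}

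You instead prove (i)$\Rightarrow$(iii) directly, by a Krylov--Bogoliubov argument on $X\times\mathbb{T}$. Carrying the phase $\chi_n(t)$ in the fibre is exactly what makes the varying characters harmless: invariance of a limit point only needs F\o lner and $\chi_n(s)\to\chi(s)$ for each fixed $s$.

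You also replace (v)$\Rightarrow$(iv)$\Rightarrow$(ii) by the orthogonality argument $\langle f,g\rangle=0$. This handles all invariant measures at once. It uses only pointwise convergence in $x$ at $s=0$, so it shows the uniformity required in (v) could be weakened.

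The step you flagged as the real work is short. Take $h$ to be the Radon--Nikodym derivative of $B\mapsto\int_{B\times\mathbb{T}}\bar z\,\dd\mathsf{M}$ with respect to $\mm$, so $|h|\le 1$. For bounded Borel $\phi$, invariance of $\mm$ and of $\mathsf{M}$ gives $\int\phi(x)\,h(t.x)\,\dd\mm(x)=\int\phi((-t).y)\,\bar z\,\dd\mathsf{M}(y,z)=\overline{\chi(t)}\int\phi\,h\,\dd\mm$.

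The one place to tighten is (ii)$\Rightarrow$(i). The eigen-equation holds $\mm$-a.e.\ separately for each $t$, so you cannot pass all $t$ to the fibres at once. Pass it to almost every fibre along a countable dense subset of $G$, then extend to all $t$ by strong continuity of the Koopman representation. Alternatively, apply the ergodic decomposition to $\mathsf{M}$ itself and pick an ergodic component with $\int F\,\dd\mathsf{M}\neq0$. Its $X$-marginal is ergodic, so your construction directly yields an eigenfunction for an ergodic measure.

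In short, the paper's route buys brevity by leaning on \cite{LS3}. Yours buys a complete proof of the hard implication and a slightly stronger form of (v)$\Rightarrow$(i).
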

\begin{proof}
The equivalence (i)$\Longleftrightarrow$ (ii)$\Longleftrightarrow$ (iii) is  \cite[Lemma~4.6]{LS3}.

\smallskip \noindent(iii) $\Longrightarrow$ (iv) is obvious. Indeed, we can get (iv) from (iii) by setting $\chi_n:= \chi$ and replacing $x$ by $s.x$.

\smallskip \noindent(iv) $\Longrightarrow$ (ii) is mot-a-mot the proof of the implication (iii) $\Longrightarrow$ (ii) in  \cite[Lemma~4.6]{LS3}.

\smallskip \noindent(iv) $\Longrightarrow$ (v) is obvious.

\smallskip \noindent(v) $\Longrightarrow$ (iv) follows immediately from the observation that for all $f,g \in C(X)$, all $x \in X, s \in G$ and all $n$ we have
\[
\left| \frac{1}{|A_n|} \int_{s+A_n}  f(t.x) \cdot \overline{\chi(t)} \, \dd t -\frac{1}{|A_n|} \int_{s+A_n}  g(t.x) \cdot \overline{\chi(t)} \, \dd t  \right| \, \leq\, \|f-g \|_\infty \,. \qedhere
\]
\end{proof}

\medskip 

Now, fix some $\mu \in \cM^\infty(G)$. It is immediate that the algebra $\AAA$ spanned by
\[
\{ N_\varphi, \overline{N_\varphi} \,:\,  \varphi \in \Cc(G) \} \cup \{ 1 \}
\]
is separating the points of $C(\XX(\mu))$ and hence is dense in $\Cc(\XX(\mu))$. This immediately implies:

\begin{corollary}\label{C1} Let $\mu \in \cM^\infty(G)$, let $\chi \in \widehat{G}$ and let $\cA$ be a van Hove sequence. Then, the following are equivalent:
\begin{itemize}
  \item[(i)] For each $G$-invariant measure $\mm$ on $\XX(\mu)$, $\chi$ is not an eigenvalue for $\mm$.
  \item[(ii)] For each ergodic measure $\mm$ on $\XX(\mu)$, $\chi$ is not an eigenvalue for $\mm$.
  \item[(iii)] For each $\varphi_1, \ldots, \varphi_m, \psi_1, \ldots, \psi_l \in \Cc(G)$ the Fourier--Bohr coefficient $a_\chi(f)$ of
  \[
  f(t)= \left( \prod_{j=1}^m \mu*\varphi_j(t) \right) \overline{ \left( \prod_{k=1}^l \mu*\psi_j(t) \right)} 
  \]
  are uniformly vanishing. \qed
\end{itemize}
\end{corollary}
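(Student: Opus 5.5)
The plan is to derive Corollary~\ref{C1} from the preceding theorem for metric dynamical systems, taking $X=\XX(\mu)$ and using the given van Hove sequence $\cA$ as the F\o lner sequence. Here $\XX(\mu)$ is a compact metrisable space on which $G$ acts continuously. The equivalence (i)$\Longleftrightarrow$(ii) is then immediate from the theorem. What remains is to identify condition (iii) of the corollary with condition (v) of the theorem, applied to a suitable dense subset of $C(\XX(\mu))$.

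The first step is to check that the functions $f$ in (iii) come from elements of the algebra $\AAA$ evaluated along the orbit of $\mu$. I read $N_\varphi$ as $F_\varphi$. Up to the harmless reflection $\varphi\mapsto\varphi^\dagger$, $\varphi^\dagger(x)=\varphi(-x)$, we have $F_{\varphi^\dagger}(\tau_{-t}\mu)=\mu*\varphi(t)$, with a sign convention fixed by the definition $\tau^{}_t\mu(\psi)=\mu(\tau_{-t}\psi)$. Hence for a monomial
\[
H \, = \, \prod_{j=1}^m F_{\varphi_j^\dagger}\, \overline{\prod_{k=1}^l F_{\psi_k^\dagger}}
\]
we get $H(t.\mu)=f(t)$. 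Uniform vanishing of $a_\chi(f)$ says exactly that
\[
\frac{1}{|A_n|}\int_{s+A_n} H(t.\mu)\,\overline{\chi(t)}\,\dd t\to 0
\]
uniformly in $s\in G$.

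The main step, and the main obstacle, is upgrading this uniformity in $s$ at the single point $\mu$ to uniformity in $x\in\XX(\mu)$ and $s\in G$. I would argue exactly as in the proof of Proposition~\ref{prop2}. For fixed $n$, the map
\[
x\mapsto \frac{1}{|A_n|}\int_{A_n} H(t.x)\,\overline{\chi(t)}\,\dd t
\]
is continuous on $\XX(\mu)$, by continuity of $H$ and dominated convergence ($H$ is bounded because $\XX(\mu)$ is compact). Replacing $x$ by $s.x$ shifts the integration window to $s+A_n$ and multiplies the integrand by the unimodular factor $\chi(s)$. So uniform smallness on the dense orbit $\{t.\mu\}$ for all $n>N$ extends, by continuity, to all of $\XX(\mu)$ and all shifts $s$.

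Conversely, uniformity over all $x$ trivially gives the statement at $x=\mu$. So (iii) says precisely that every monomial $H$ satisfies the property in (iv)/(v) of the theorem. Since the property is linear in $f$, it holds on all of $\AAA$ once we note that the constant $1$ is covered by taking $m=l=0$.

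Two points need care. The case $\chi=0$ requires the empty product to be allowed, so that $f\equiv 1$ is included. Indeed $a_0(1)=1\neq0$, and correspondingly $\chi=0$ is always an eigenvalue, which is consistent with both sides failing. The second point is density. The algebra $\AAA$ is self-adjoint, contains the constants, and separates points, since $F_\varphi$ for $\varphi\in\Cc(G)$ determine a measure. Stone--Weierstrass therefore gives density in $C(\XX(\mu))$.

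Then (iii) yields theorem (v), hence (i)/(ii). For the converse, (i) gives theorem (iv) for all continuous functions, in particular for the monomials $H$. Evaluating at $x=\mu$ then gives (iii).
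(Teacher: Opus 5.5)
Your proposal is correct and follows the paper's route: apply the preceding dynamical theorem on $\XX(\mu)$ through its density criterion (v), using the Stone--Weierstrass-dense algebra generated by $1$, $F_\varphi$ and $\overline{F_\varphi}$. You pass from uniformity in $s$ at the single point $\mu$ to uniformity in $x\in\XX(\mu)$ as in Proposition~\ref{prop2}, a step the paper leaves implicit, and your observation that the empty product $f\equiv 1$ must be allowed when $\chi$ is trivial is apt. One harmless slip: the correct identity is $F_{\varphi}(\tau_{-t}\mu)=\mu*\varphi(t)$ with no reflection, since your $F_{\varphi^\dagger}(\tau_{-t}\mu)$ equals $\mu*\varphi^\dagger(t)$; this changes nothing because the class of functions in (iii) is invariant under $\varphi\mapsto\varphi^\dagger$.
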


Also, as an immediate consequence of Corollary~\ref{C1} we get:

\begin{theorem}Let $\mu \in \cM^\infty(G)$. Then, the following, are equivalent:
\begin{itemize}
\item[(i)] Each $G$-invariant measure $\mm$ on $\XX(\mu)$ has continuous dynamical spectrum.
  \item[(ii)] Each ergodic measure $\mm$ on $\XX(\mu)$ has continuous dynamical spectrum.
  \item[(iii)] For all $\varphi_1, \ldots, \varphi_m, \psi_1, \ldots, \psi_l \in \Cc(G)$ and all $0 \neq \chi \in \widehat{G}$, the Fourier--Bohr coefficients $a_\chi(f)$ of
  \[
  f(t)= \left( \prod_{j=1}^m \mu*\varphi_j(t) \right) \overline{ \left( \prod_{k=1}^l \mu*\psi_j(t) \right)}
  \]
are uniformly vanishing. \qed
\end{itemize}
\end{theorem}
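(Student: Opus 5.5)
The plan is to reduce the statement to Corollary~\ref{C1}, applied at every nonzero character simultaneously. Continuous dynamical spectrum for $\mm$ means that the only eigenvalue of the Koopman representation on $L^2(\XX(\mu),\mm)$ is the trivial character $\chi=0$, whose eigenfunctions are the constants. Hence condition (i) is equivalent to: for every $0\neq\chi\in\widehat{G}$ and every $G$-invariant probability measure $\mm$ on $\XX(\mu)$, $\chi$ is not an eigenvalue for $\mm$. Swapping the order of the quantifiers is harmless, so (i) holds if and only if, for each fixed $0\neq \chi$, condition (i) of Corollary~\ref{C1} holds. The same reformulation turns (ii) into the statement that condition (ii) of Corollary~\ref{C1} holds for each $0\neq\chi$. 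Likewise (iii) is literally condition (iii) of Corollary~\ref{C1}, quantified over all $0\neq\chi$.

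The proof then consists of applying Corollary~\ref{C1} to each $0\neq\chi$ separately. That corollary gives the equivalence of its (i), (ii) and (iii) for that particular $\chi$. Intersecting over all $0\neq\chi\in\widehat{G}$ gives (i)$\iff$(ii)$\iff$(iii).

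One point to check is that ergodic measures exist and that the definition of eigenvalue used in Corollary~\ref{C1} matches the one in ``continuous dynamical spectrum''. The paper's convention appears to be that the trivial eigenvalue is always present, since constants are invariant, so ``continuous'' must mean no nontrivial eigenvalue. This is exactly why the theorem restricts to $0\neq\chi$. If instead one asked for no atoms at all, including $0$, the statement would be vacuous, because $\chi=0$ is always an eigenvalue of the constant function $1$. So I will state this convention explicitly at the start.

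I expect no real obstacle: the substance sits in Corollary~\ref{C1}, hence in the preceding theorem and the density of the algebra $\AAA$. The only delicate step is the correct interpretation of continuous dynamical spectrum as the absence of nontrivial eigenvalues, which I will make explicit.
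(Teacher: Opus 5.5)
Your proposal is correct and follows the paper's own route: the theorem is stated there as an immediate consequence of Corollary~\ref{C1}, applied separately to each $0\neq\chi\in\widehat{G}$ and then combined over all such $\chi$, with continuous dynamical spectrum read as the absence of nontrivial eigenvalues. One small correction: for a non-ergodic $G$-invariant $\mm$, the eigenspace of the trivial character consists of all invariant functions, not just the constants, and this is exactly why ``continuous'' has to be read as ``no eigenvalue $\chi\neq 0$'' for (i) and (ii) to be equivalent.
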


Let us complete the paper by briefly reviewing one classical example of a measure with continuous diffraction spectra but mixed dynamical spectrum. In particular, in this case, for each $\varphi \in \Cc(G)$ the function $\mu*\varphi$ must have uniformly null Fourier--Bohr coefficients, but some products of such functions have non-zero Fourier--Bohr coefficients.

\begin{example}[Thue--Morse] Consider the $\{ \pm 1 \}$ weighted Dirac comb $\omega$ of the Thue--Morse sequence, see \cite[Section~4.6]{TAO} for details.

Then $\XX(\omega)$ is uniquely ergodic (see for example \cite{Kak,BC}) and its unique autocorrelation is purely singular continuous \cite[Theorem 10.1]{TAO}.

On another hand, the set of eigenvalues of $\XX(\omega)$ is $\ZZ[\frac{1}{2}]$ and each eigenvalue is topological, meaning it has a continuous eigenfunction \cite{Que}. \exend
\end{example}

\subsection*{Acknowledgments}
We would like to thank Michael Baake, for some discussions and suggestions which improved this manuscript. The author was supported by the Natural Sciences and Engineering Council of Canada via grant 2024-04853, and he is grateful for the support.


\begin{thebibliography}{99}

\bibitem{ARMA1}
L.~Argabright and J.~Gil de Lamadrid, \textit{Fourier Analysis of
Unbounded Measures on Locally Compact Abelian Groups}, Memoirs
Amer.\ Math.\ Soc. \textbf{145}, (AMS, Providence,
RI) (1974) .

\bibitem{TAO}
M. ~Baake and U.~ Grimm, \textit{Aperiodic Order. Vol.~1: A Mathematical Invitation},
(Cambridge University Press, Cambridge) (2013).

\bibitem{TAO2}
M. Baake and U. Grimm (eds.), \textit{Aperiodic Order. Vol.~2: Crystallography and Almost Periodicity}
(Cambridge University Press, Cambridge) (2017).

\bibitem{BBG}
M. Baake, M.~Birkner and U.~Grimm, \textit{Non-periodic systems with continuous diffraction measures}. In: \cite{KLS} (2015), pp. 1--32; \texttt{arXiv:1502.05122}.


\bibitem{BBM}
M. Baake, M.~Birkner and R.V.Moody, \textit{Diffraction of stochastic point sets: Explicitly computable examples},  Commun. Math. Phys. \textbf{293} (2010), 611--660; \texttt{arXiv:0803.1266}.


\bibitem{BC}
M. Baake and M. Coons, \textit{Correlations of the Thue--Morse sequence}, Indag. Math. \textbf{35(5)} (2024), 914--930; \texttt{arXiv:2209.07102}.



\bibitem{BL}
M. Baake and D. Lenz, \textit{Dynamical systems on translation bounded measures: pure point dynamical and diffraction spectra},
Erg. Th. \& Dynam. Syst. \textbf{24}(2004), 1867--1893; \texttt{arXiv:math/0302061}.

\bibitem{BM}
M.~Baake and R.V.~Moody (eds.), \textit{Directions in Mathematical
Quasicrystals}, CRM Monograph Series \textbf{13}, AMS (Providence, RI)
(2000).


\bibitem{BF}
C. ~Berg and G. ~Forst, \emph{Potential Theory on Locally Compact
Abelian Groups}, (Springer, Berlin) (1975).

\bibitem{FRS}
M. Francis, C. Ramsey and N. Strungaru,
\textit{On the spectrum of non-ergodic measures},  preprint (2026); \texttt{arXiv:2601.05327}.

\bibitem{ARMA}
J.~Gil.~de~Lamadrid and L.~N.~Argabright, \textit{Almost Periodic Measures}, Memoirs Amer. Math. Soc. \textbf{85(428)}, (AMS, Providence,
RI)(1990).

\bibitem{Gou}
J.-B.~Gou\'{e}r\'{e}, \textit{Diffraction and Palm measure of point processes}, C. R. Acad. Sci. Paris \textbf{336} (2003), 57--62; \texttt{arXiv:math/0208064}.


\bibitem{Gou2}
J.-B. Gou\'{e}r\'{e}, \textit{Quasicrystals and almost
periodicity}, Commun. Math. Phys. \textbf{255} (2005), 655--681; \texttt{arXiv:math-ph/0212012}.

\bibitem{HOF}
A. Hof, \textit{On diffraction by aperiodic structures}, Commun. Math. Phys. \textbf{169}(1995), 25--43.

\bibitem{HOF1}
A. Hof, \textit{Diffraction by aperiodic structures}. In: \textit{The Mathematics of Long-Range Aperiodic Order}(R.V. Moody ed.), NATO-ASI Series C \textbf{489} (Kluwer, Dordrecht) (1997), pp. 239--268.



\bibitem{Kak}
S. Kakutani, \textit{Strictly ergodic symbolic dynamical systems}. In: \textit{Proceedings of the Sixth Berkeley Symposium
on Mathematical Statistics and Probability}(L.M. Le Cam, J. Neyman and E.L. Scott eds.), University of
California Press, (Berkeley) (1972), pp. 319--326.

\bibitem{KLS}
J.~Kellendonk, D.~Lenz and J.~Savinien (eds.), \textit{Mathematics of Aperiodic Order},  Progr. Math. \textbf{309} (Birkh\"auser/Springer, Basel) (2015).


\bibitem{LMS}
J.-Y.\ Lee, R.~V.~Moody and  B.~Solomyak, \textit{ Pure point dynamical and diffraction spectra}, Ann. H.\ Poincar\'{e} \textbf{3} (2002), 1003--1018; \texttt{arxiv:0910.4809}.

\bibitem{DV}
D.D.~Daley, D.~Vere-Jones, \textit{ An Introduction to the Theory of Point Processes I: Elementary Theory and
Methods.} 2nd ed., 2nd corr. printing, (New York: Springer)(2005).


\bibitem{DL}
D. Lenz, \textit{Continuity of eigenfunctions of uniquely ergodic dynamical systems and intensity of Bragg peaks},
Commun. Math. Phys. \textbf{287(1)} (2009), 225--258;
\texttt{arXiv:math-ph/0608026}.

\bibitem{LSS}
D.~Lenz, T.~Spindeler and N.~Strungaru, \textit{Pure point diffraction and mean, Besicovitch and Weyl almost periodicity},
\textit{preprint} (2020); \texttt{arXiv:2006.10821}.


\bibitem{LSS3}
D. Lenz, T. Spindeler and N. Strungaru, \textit{The (reflected) Eberlein convolution of measures}, Indag. Math. \textbf{35(5)} (2023), 959--988; \texttt{arXiv:2211.06969}.


\bibitem{LS3}
D. Lenz and N. Strungaru, \textit{Wiener--Wintner points for topological dynamical systems},  preprint (2025); \texttt{arXiv:2510.18056}.


\bibitem{LS}
D. Lenz and N. Strungaru, \textit{ Diffraction as a unitary representation and the orthogonality of measures with respect to the reflected Eberlein convolution}, to appear in J.  Four. Anal. and App. (2026); \texttt{arXiv:2402.01044}.

\bibitem{MPS}
R.V.~Moody, D.~Postnikoff and N. Strungaru, \textit{Circular symmetry of pinwheel diffraction}, Ann. H.\ Poincar\'{e} \textbf{7(4)} (2006), 711--730.

\bibitem{MS}
R.V.~Moody and N.~Strungaru, \textit{ Point sets and dynamical systems in the autocorrelation topology}, Canad. Math. Bull. \textbf{47(1)}(2004), 82--99.  	


\bibitem{MoSt}
R.V.~Moody and N.~Strungaru, \textit{ Almost periodic measures and their Fourier
transforms}. In: \cite{TAO2}(2017), pp. 173--270.


\bibitem{Que}
M. Queff\'elec, \textit{Substitution Dynamical Systems—Spectral Analysis}, LNM \textbf{1294} 2nd ed., (Springer, Berlin) (2010).


\bibitem{She}
D.~Shechtman, I.~Blech, D.~Gratias and J.W.~Cahn,  \textit{Metallic phase with
long-range orientational order and no translational symmetry}, Phys. Rev. Lett.
\textbf{53} (1984), 1951--1953.


\bibitem{Sch}
M.~Schlottmann, \textit{Generalized model sets and dynamical
systems}. In: \cite{BM}(2002), pp 143--159.



\bibitem{NS}
N. Strungaru, \textit{On the orthogonality of measures of different spectral type with respect to twisted Eberlein convolution}, preprint (2022); \texttt{arXiv:2402.01044}.


\end{thebibliography}
\end{document}